\documentclass[11pt]{article}
\usepackage{amssymb,amsmath,MnSymbol}
\usepackage{color}
\usepackage{amsthm}
\usepackage{graphicx}
\usepackage{color}
\usepackage{authblk}
\usepackage{url}
\usepackage[dvipsnames]{xcolor}
\usepackage{rotating,booktabs}
\usepackage{arydshln}
\usepackage{bbold}
\usepackage{enumerate}
 \usepackage[pagewise]{lineno}


\setlength\dashlinedash{1pt}
\setlength\dashlinegap{1pt}
\setlength\arrayrulewidth{0.1pt}

\def\Sym{\mbox{\rm Sym}}

\renewcommand{\phi}{\varphi}
\newcommand{\F}{\mathbb{F}}
\newcommand{\Z}{\mathbb{Z}}
\newcommand{\Span}[1]{\left\langle\, #1 \,\right\rangle}

\newcommand{\AGL}{\mbox{\rm AGL}}

\newcommand{\veps}{\varepsilon}
\newcommand{\hex}[1]{{\texttt{#1}_{\text{x}}}}
\newcommand\deq{\mathrel{\stackrel{\makebox[0pt]{\mbox{\normalfont\tiny def}}}{=}}}

\newcommand{\Set}[1]{\left\{ #1 \right\}}

\newcommand{\K}[0]{\mathcal{K}}
\newcommand{\Hc}[0]{\mathcal{H}}

\usepackage{lipsum}

\usepackage[usenames,dvipsnames]{pstricks}
\usepackage{epsfig}
\usepackage{pst-grad} 
\usepackage{pst-plot} 

\usepackage{pstricks, pst-node, graphics}
\newpsobject{showgrid}{psgrid}{subgriddiv=1,griddots=10,gridlabels=6pt}

\makeatletter
\def\blfootnote{\gdef\@thefnmark{}\@footnotetext}
\makeatother

\DeclareMathOperator{\sym}{Sym}

\newtheorem{theorem}{Theorem}[section]
\newtheorem{definition}[theorem]{Definition}
\newtheorem{lemma}[theorem]{Lemma}
\newtheorem{proposition}[theorem]{Proposition}
\newtheorem{corollary}[theorem]{Corollary}
\newtheorem{remark}[theorem]{Remark}
\newtheorem{example}[theorem]{Example}


\title{
Type-Preserving Matrices and Security of Block Ciphers\blfootnote{\;\\\textit{Email addresses}: \texttt{ric.aragona@gmail.com} (R. Aragona), \texttt{alessio.meneghetti@unitn.it } (A. Meneghetti)}
}


\author[1]{Riccardo Aragona
\thanks{The first author is member of of INdAM-GNSAGA (Italy) and he thankfully acknowledges support by DISIM of the University of L'Aquila and by MIUR-Italy via PRIN 2015TW9LSR ``Group theory and applications''}
}
\author[2]{Alessio Meneghetti
}
\affil[1]{
\small{DISIM, Universit\`a degli Studi dell'Aquila 

Via Vetoio, 67100 Coppito (AQ), Italy
}}

\affil[2]{
\small{Dipartimento di Matematica, Universit\`a degli Studi di Trento 

Via Sommarive 14, 38123 Povo (TN),  Italy
}}

\date{}
\begin{document}
\maketitle
\vspace{-5mm}
\begin{abstract}
\noindent We introduce a new property for mixing layers which guarantees protection against algebraic attacks based on the imprimitivity of the group generated by the round functions. Mixing layers satisfying this property are called \textit{non-type-preserving}. 
Our main result is to characterize such mixing layers by providing a list of necessary and sufficient conditions on the structure of their underlying binary matrices. 
Then we show how several
families of linear maps are non-type-preserving, including the mixing layers of AES, GOST and PRESENT. Finally we prove that the group generated by the round functions of an SPN cipher with addition modulo $2^n$ as key mixing function is primitive if its mixing layer satisfies this property. 

\end{abstract}
\medskip
\noindent\small{\textbf{Keywords:}
Cryptosystems, mixing layer, group generated by the round functions, primitive groups}\\
\medskip
\small{\textbf{MSC 2010:} 20B15,
20B35,
94A60
}


\section{Introduction}\label{sec:intro}

Most modern block ciphers are iterated block ciphers, i.e. are obtained as composition of round functions, and belong to two families of cryptosystems, i.e. Substitution Permutation Networks (SPN) and Feistel Networks (FN). Within each round  three permutations of the plaintext space operate, i.e. a non-linear layer and a linear layer which respectively perform confusion and diffusion (see \cite{sha49}) and a key mixing function which combines the message with the corresponding round key.
Most SPN's use the XOR as key mixing, but in many Feistel Networks (e.g.  MARS \cite{mars}, GOST \cite{GOST}, RC6 \cite{RC6}, SEA \cite{sea}) and in other block ciphers not belonging to these two families (e.g. IDEA \cite{idea}) the key mixing function is the addition modulo $2^n$, for some  integer $n$.

Using addition modulo $2^n$ as key mixing function may increase the nonlinearity of a round function. Intuitively, one could take that adding an extra nonlinear layer increases the complexity of  attacks. Actually in \cite{lin-att} the authors prove from a theoretical point of view  that adopting a key mixing defined by an addition modulo $2^n$ can help to prevent linear cryptanalysis. Then they consider two toy SPN's, GPig1 and GPig2, with the same structure but  with key mixing respectively defined by XOR and addition modulo $2^n$ and check from an experimental point of view that the first one is weaker than the latter against linear cryptanalysis. On the other hand in \cite{alg-att} the authors investigate how the use of addition modulo $2^n$ in round functions influences
algebraic attacks. Also in \cite{prop-mod}  statistical and algebraic properties of  addition modulo a power of two are studied from a cryptographic point of view.


In this paper, we aim to investigate which properties of the mixing layer are useful to avoid particular classes of algebraic vulnerabilities on an SPN which uses  addition modulo $2^n$ as key mixing function. Some algebraic properties of the round functions can indeed hide  some  weaknesses of the corresponding cipher.
Firstly, in 1975 Coppersmith and Grossman \cite{copp} defined a family of functions which can be used as round functions of a block cipher and studied the permutation group generated by those. Then it has been found out that some group-theoretical properties  can reveal weaknesses of the cipher itself.
For example, if such group is too small, then the cipher is vulnerable  to birthday-paradox attacks (see \cite{kalinski}). Recently, in \cite{Ca15} the authors proved that if such group is of affine type, then it is possible to embed a dangerous trapdoor on the cipher.  More relevant, in \cite{Pat} Paterson built a DES-like cipher whose encryption functions generate an imprimitive group and showed how the knowledge of this trapdoor can be turned into an efficient attack to the cipher.
For this reason, a branch of research in symmetric cryptography is focused on showing that the group generated by the encryption functions of a
given cipher is primitive  (see \cite{PriPre, ACDVS, GOST_ric, cal18, ONAN, CDVS09, SW, We1, We3, We2}). 

Our aim is to  guarantee protection against algebraic attacks based on the imprimitivity of the group generated by the round functions of block ciphers which use addition modulo $2^n$ as key mixing function.  
We do so by identifying a necessary and sufficient property of the structure of the binary matrix associated to the mixing layer, under the only hypothesis of S-Box invertibility.  In particular, we give the definition of {\em type-preserving matrix} and we prove that the group generated by the round functions of an SPN cipher with  addition modulo $2^n$ as key mixing function is primitive if its mixing layer is not type-preserving. 

The paper is organized as follows. In Section \ref{sec:not}, we give our  notation, as well as some basic definitions and results concerning  block ciphers and primitive permutations groups. 
In Section \ref{sec:typepres}  we present a new property for mixing layer, called \emph{non-type-preserving}. Then, after having proved  our result regarding the necessary and sufficient conditions for a mixing layer to be non-type-preserving, we show that some known mixing layers, such as those employed in GOST \cite{GOST}, PRESENT \cite{PRESENT}, AES \cite{AES} and GPig2 \cite{lin-att}, are non-type-preserving. Even though the key mixing of AES and PRESENT is the classical XOR addition instead of the addition modulo $2^n$,  their mixing layers are real-life examples of non-type-preserving matrices. In Section \ref{sec:app},  we prove that an SPN which uses addition modulo $2^n$ as key mixing function and a non-type-preserving matrix as mixing layer is primitive. Finally, we use a non-type-preserving mixing layer to extend a GOST-like cipher, defined in \cite{GOST_ric},  and we prove its primitivity if the S-Boxes are invertible.



\section{Notation and preliminary results}\label{sec:not}
\subsection{Permutation groups}\label{subsec:permgroup}
We recall some basic notions from permutation group theory. Let $G$ be a finite group acting on the set $V$. For each $g \in G$ and $v \in V$ we denote the action of $g$ on $v$ as $vg$. We denote by $vG=\{vg \mid v \in G\}$ the orbit of $v \in V$ and by $G_v=\{g \in G \mid vg=v\}$ its stabilizer.
The group $G$ is said to be \emph{transitive} on $V$ if for each $v,w \in V$ there exists $g \in G$ such that $vg=w$.
A partition $\mathcal{B}$ of $V$ is \emph{trivial} if $\mathcal{B}=\{V\}$ or $\mathcal{B}=\{\{v\} \mid v \in V\}$, and  \emph{$G$-invariant} if for any $B \in \mathcal{B}$ and $g \in G$ it holds $Bg \in \mathcal{B}$. Any non-trivial and $G$-invariant partition $\mathcal{B}$ of $V$ is called a \emph{block system}. In particular any $B \in \mathcal{B}$ is called an \emph{imprimitivity block}. The group $G$ is \emph{primitive} in its action on $V$  if {$G$ is transitive and} there exists no block system. 
Otherwise,  the group $G$ is \emph{imprimitive} in its action on $V$. We remind the following well-known results whose proofs may be found e.g. in \cite{Cam}. 

\begin{lemma}\label{lemma:block}
  A block of imprimitivity is the orbit $v  H$ of a proper subgroup  $H < G$ that properly  contains  the
  stabilizer $G_v$,  for some $v  \in V$.
\end{lemma}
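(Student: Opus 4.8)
The plan is to realize the block as an orbit of its own setwise stabilizer, and then read off the two required strict inclusions from the assumption that the block system is nontrivial. Throughout I use that in the imprimitive situation $G$ is transitive on $V$, together with the defining property of blocks in a $G$-invariant partition: two members of $\mathcal{B}$ that share a point must coincide.

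First I would fix a point $v \in V$ lying in some imprimitivity block $B \in \mathcal{B}$, and set $H = \{g \in G \mid Bg = B\}$, the setwise stabilizer of $B$ in $G$. This is immediately a subgroup. The first claim is $G_v \le H$: if $g \in G_v$, then $Bg$ is again a member of $\mathcal{B}$ (since $\mathcal{B}$ is $G$-invariant) and it contains $vg = v$; as two blocks sharing the point $v$ must be equal, $Bg = B$, so $g \in H$.

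Next I would prove $vH = B$. The inclusion $vH \subseteq B$ is clear, because $g \in H$ gives $vg \in Bg = B$. For the reverse inclusion I would take any $w \in B$ and use transitivity to find $g \in G$ with $vg = w$; then $Bg$ contains $w \in B$, so the blocks $Bg$ and $B$ are not disjoint, forcing $Bg = B$ and hence $g \in H$. Thus $w = vg \in vH$, giving $B = vH$.

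Finally I would extract the strict inclusions. Since $G_v \le H$, the stabilizer of $v$ computed inside $H$ is exactly $G_v$, so $|B| = |vH| = [H : G_v]$, while $|V| = |vG| = [G : G_v]$. As $\mathcal{B}$ is a \emph{nontrivial} block system, $B = vH$ is neither a singleton nor all of $V$: the condition $|B| > 1$ yields $[H : G_v] > 1$, i.e. $G_v < H$, and the condition $B \ne V$ yields $[H : G_v] < [G : G_v]$, i.e. $H < G$ via the tower law $[G:H]=[G:G_v]/[H:G_v]$. This gives $G_v < H < G$ and completes the argument. I do not anticipate a genuine obstacle here; the only delicate point is the bookkeeping that matches the two ``proper inclusion'' statements to the two ways in which $B$ fails to be a trivial block, and the verification that transitivity is indeed available in the imprimitive setting.
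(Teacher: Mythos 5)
Your proof is correct and is the standard block--stabilizer correspondence argument; the paper itself gives no proof of this lemma, deferring to \cite{Cam}, where essentially the same argument appears (take the setwise stabilizer $H$ of a block $B$, show $G_v \le H$ and $vH = B$ using transitivity and the fact that intersecting blocks coincide, then read off the strict inclusions from nontriviality). The only step you assert without justification---that the particular block $B$ is neither a singleton nor all of $V$, when nontriviality of $\mathcal{B}$ formally only excludes the two trivial partitions---is closed by the one-line observation already implicit in your second paragraph: transitivity makes every block of the system a translate $Bg$ of $B$, so all blocks have the same cardinality and $1 < \lvert B \rvert < \lvert V \rvert$ follows.
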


\begin{lemma}\label{lemma:trans}
  If $T$ is a transitive subgroup of $G$, then a block system
  for $G$ is also a block system for $T$.
\end{lemma}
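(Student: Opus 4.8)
The plan is to check that a block system for $G$ automatically satisfies the two defining properties of a block system for $T$: it is a non-trivial partition of $V$, and it is invariant under the action of $T$. So I would fix a block system $\mathcal{B}$ for $G$, which by definition is a non-trivial $G$-invariant partition of $V$, and argue that the very same partition qualifies as a block system for $T$.

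The first property is immediate and requires no work: non-triviality means only that $\mathcal{B}$ is neither $\{V\}$ nor $\{\{v\} \mid v \in V\}$, which is a condition on the partition $\mathcal{B}$ itself and makes no reference to any group. Hence $\mathcal{B}$ stays non-trivial when we view it relative to $T$.

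The second property is where the inclusion $T \leq G$ enters. For every part $B \in \mathcal{B}$ and every $t \in T$, the inclusion gives $t \in G$, so the $G$-invariance of $\mathcal{B}$ yields $Bt \in \mathcal{B}$. Thus each element of $T$ permutes the parts of $\mathcal{B}$ among themselves, that is, $\mathcal{B}$ is $T$-invariant; combined with non-triviality this exhibits $\mathcal{B}$ as a block system for $T$.

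I do not expect a serious obstacle here: the argument is a direct transfer of invariance along the subgroup inclusion $T \leq G$, and the transitivity of $T$ is what makes the conclusion useful in practice (it places $T$ in the transitive setting where a block system genuinely witnesses imprimitivity, with all parts of equal size) rather than what drives the proof. The one point to keep clearly in mind is the conceptual separation between non-triviality, which is intrinsic to $\mathcal{B}$ as a partition, and $T$-invariance, which is the sole condition needing re-verification and which follows at once from $T \leq G$.
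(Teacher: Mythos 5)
Your proof is correct and is the standard argument: the paper gives no proof of this lemma, referring instead to Cameron's book \cite{Cam}, and your direct verification---$T$-invariance inherited from the inclusion $T \leq G$, non-triviality being intrinsic to the partition $\mathcal{B}$---is exactly what that reference supplies. Your observation that the transitivity of $T$ plays no role in the proof under the paper's partition-based definition of a block system, and is needed only so that the conclusion genuinely witnesses imprimitivity of $T$ (as in Lemma~\ref{lemma:trans}'s later use for $\mathcal{T} \leq \Gamma_\infty$), is also accurate.
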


\subsection{Substitution Permutation Networks}
Let $n\in \mathbb N$ and let   $V=\F_2^n$ be  the plaintext space. 
Let  $\sym(V)$ be the symmetric group acting on $V$, i.e. the group of all permutations on $V$, and  by $\AGL(V)$ the group of all affine permutations of $V$, which is a primitive maximal subgroup of $\Sym(V)$, i.e., $\AGL(V)$ is a primitive proper  subgroup such that there is no other primitive proper subgroup containing it.

\noindent A \emph{block cipher} $\mathcal{C}$ is a family of key-dependent permutations of $V$ 
\[\{\veps_K \mid \veps_K: V \rightarrow V, \, K \in \mathcal K \}\subseteq \mathrm{Sym}(V),\]
where $\mathcal K$ is the key space, and $|V|\leq |\mathcal K|$. The permutation $\veps_K$ is called the \emph{encryption function induced by the master key} $K$. Let $\phi:\{1,\ldots,r\}\times\mathcal{K}\longrightarrow V$ be a public procedure known as \emph{key-schedule}, such that $\phi(h,K)$ is the $h$-th round key, given the master key $K$. The block cipher $\mathcal{C}$ is called an {iterated block cipher} if there exists $r \in \mathbb N$ such that for each $K \in \mathcal K$ the encryption function $\veps_K$ is the composition of $r$ {round functions}, i.e. $\veps_K = \veps_{\phi(1,K)}\,\veps_{\phi(2,K)}\ldots\veps_{\phi(r,K)}$. Each round function $\veps_{\phi(h,K)}$ is a permutation of $V$ depending on the $h$-th  \emph{round key}. 

\noindent Most modern iterated block ciphers  belong to two families of cryptosystems: \emph{Substitution Permutation Networks}, briefly \emph{SPN} (see e.g.  SERPENT \cite{SERPENT}, PRESENT \cite{PRESENT}, AES \cite{AES}) and \emph{Feistel Networks}, briefly \emph{FN} (see e.g. Camelia \cite{camelia}, GOST \cite{GOST}). In this paper we mainly deal with ciphers of SPN type and  we  define a class of round functions for iterated block ciphers which is large enough to include the round functions of classical SPN's.

\vspace{2mm}

\noindent Let $V= V_1\times V_2\times \ldots\times V_\delta$ where,  for $1\leq j\leq \delta$, $\dim(V_j) = m$, with $m$ dividing $n$, and $\times$ represents the Cartesian product of vector spaces. The spaces $V_j$'s are called \emph{bricks}.

\begin{definition}
For each $k \in V$, a \emph{round function} induced by $k$ is a map $\veps_{k} \in \sym(V)$ 
where $\veps_{k} = \gamma \lambda \sigma_{k}$ and 
\begin{itemize}
\item $\gamma : V \rightarrow V$ is a non-linear permutation, called \emph{parallel S-Box}, which acts in parallel way on each $V_{j}$, i.e. \[(x_1,x_2,\ldots,x_n)\gamma = \left((x_1,\ldots,x_{m})\gamma_1,\ldots,(x_{m(\delta-1)+1},\ldots,x_{n})\gamma_\delta\right);\] the maps $\gamma_j  :  V_j \rightarrow V_j $ are traditionally called \emph{S-Boxes},
 \item $\lambda \in \sym(V)$ is a linear map, called \emph{mixing layer},
 \item  $\sigma_{k}: V \rightarrow V$ is the  \emph{key mixing function}, that is a permutation of $V$ combining the message with the corresponding round key $k$.
 \end{itemize}
\end{definition}
\noindent Since studying the role of the key-schedule is out of the scopes of this paper, we can simply suppose that round keys are randomly-generated vectors in $V$.

\vspace{2mm} 

\noindent Usually, the key mixing function of well-established SPN's, such as AES, PRESENT, SERPENT, is $\sigma_k: x \mapsto x+k$, where $+$ is the usual bitwise \emph{XOR}. Note that SPN's featuring a XOR-based key addition have been also called \emph{Translation-Based ciphers} in \cite{CDVS09}. In many other ciphers (e.g.  MARS \cite{mars}, GOST \cite{GOST}, IDEA \cite{idea}, RC6 \cite{RC6}, SEA \cite{sea}) the key mixing is the addition modulo $2^m$, for some integer $m$. This kind of key mixing function may be used to increase the nonlinearity of a round function (see for example \cite{lin-att}).  In particular, in this work we are interested in SPN's which combine the message with the key by  the addition modulo $2^{\dim(V)}$ (see \cite{alg-att,lin-att}).  

\begin{definition}
We denote by \emph{SPNmod} an SPN operating on the plaintext space $V$ in which the  key mixing function   is the addition modulo $2^n$, where $n=\dim (V)$.
\end{definition}

\subsection{Group generated by the round functions and Primitivity}
Besides the classical statistical attacks (e.g. differential and linear cryptanalysis), it is proved that also some algebraic attacks can be effective and dangerous (see, for instance, \cite{Ca15,kalinski,Pat}). In this paper we focus on a particular  attack, described in \cite{Pat}, based on the \emph{imprimitivity} of the permutation group generated by the round functions of a block cipher.

\vspace{2mm}

Let $\mathcal{C} = \{\veps_K \mid K \in \mathcal K \} \subseteq \sym(V) $ be an $r$-round iterated block cipher.
Several researchers have shown in recent years that the group generated by the encryption functions of a block cipher
\[
\Gamma(\mathcal{C}) \deq \langle \veps_K \mid K \in \mathcal K\rangle \leq \sym(V)
\]
can reveal weaknesses of the cipher itself (see for example \cite{Ca15,kalinski,Pat}). However, the study of $\Gamma(\mathcal{C})$ is not an easy issue in general, since it strongly depends on the key-schedule function (for an example of a key-schedule related study, see \cite{bea}). 
Hence the research focuses on the group generated by the round functions 
\[
	\Gamma_{\infty}(\mathcal{C})\deq\langle \veps_{h,K}\mid 1 \leq h \leq r, K \in \mathcal{K}\rangle,
\]
where all the possible round keys for round $h$ are considered as varying $K\in\mathcal{K}$. Such group contains $\Gamma(\mathcal{C})$ and  allows to ignore the effect of the key-schedule. 

\vspace{2mm}

In our case $\mathcal{C}$ is an $r$-round SPNmod cipher and the $i$-th round function is  $\veps_{i,K}=\gamma\lambda\sigma_{K_i}$, where $K_i$ is the $i$-th round key derived by the key schedule  
$$
\begin{array}{rcl}
\mathcal{K}&\rightarrow &V^r\\\
K &\mapsto &(K_1,\ldots,K_r),
\end{array}
$$ 
which we suppose  surjective w.r.t. any round.

\noindent The corresponding group generated by the round functions is

\[
	\Gamma_{\infty}\deq\langle \gamma\lambda\sigma_{K_i}\mid 1 \leq i \leq r, K \in \mathcal{K}\rangle.
\]

\noindent 
Throughout this paper, sometimes we will denote $\gamma\lambda$ with $\rho$.\\

   Note that we
can consider two group structures on $V$.  The first operation is the
bitwise XOR, which will be denoted by $\oplus$ and which makes $V$ into a vector space over $\mathbb{F}_2$. 

\noindent The  second  operation,  denoted  by $\boxplus$,  is  the  sum  modulo
$2^{n}$. That is, we represent $a, b \in V$ as
\begin{equation*}
  a = (a_{0}, a_{1}, \dots, a_{n-1}),
  \quad
  b = (b_{0}, b_{1}, \dots, b_{n-1}),
\end{equation*}
with $a_{i}, b_{i} \in \Set{0, 1}$ integers, and let
\begin{equation*}
  a \boxplus b
  =
  (c_{0}, c_{1}, \dots, c_{n-1}),
\end{equation*}
where
\begin{multline*}
  (a_{0} + a_{1} 2 + a_{2} 2^{2} + \dots + a_{n-1} 2^{n-1})
  +
  (b_{0} + b_{1} 2 + b_{2} 2^{2} + \dots + b_{n-1} 2^{n-1})
  \equiv\\\equiv
  c_{0} + c_{1} 2 + c_{2} 2^{2} + \dots + c_{n-1} 2^{n-1}
  \pmod{2^{n}},
\end{multline*}
with $c_{i} \in  \Set{0, 1}$ integers. (Here $+$  denotes the ordinary
sum  of integers.)   Therefore $V$  under $\boxplus$  is equivalent to  the group  $\Z_{2^{n}}$ of integers  modulo $2^{n}$,  and we
will denote  it by $(\Z_{2^{n}},  \boxplus)$. 

\noindent We recall the following elementary fact we will be using repeatedly
without further mention.

\begin{lemma}\label{lemma:subgroups}
  
  The  subgroups  of   $(\Z_{2^{n}},  \boxplus)$  are  linearly
    ordered; they are  $\Span{2^{q}}$, for $0 \le q \le n$.
\end{lemma}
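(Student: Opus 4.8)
The plan is to reduce the statement to the well-known classification of the subgroups of $\Z$ via the correspondence theorem. First I would observe that $(\Z_{2^n}, \boxplus)$ is cyclic of order $2^n$, generated by the element $1$, and consider the canonical surjection $\pi : \Z \to \Z_{2^n}$ with kernel $2^n \Z$. By the lattice isomorphism theorem, the subgroups of $(\Z_{2^n}, \boxplus)$ are in inclusion-preserving bijection with the subgroups of $\Z$ that contain $2^n\Z$, the correspondence being $H \mapsto \pi^{-1}(H)$.

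Next I would recall that every subgroup of $\Z$ has the form $d\Z$ for a unique $d \ge 0$ (if the subgroup is nonzero, take $d$ to be its least positive element and apply the division algorithm). The containment $2^n\Z \subseteq d\Z$ holds precisely when $d \mid 2^n$, that is, when $d = 2^q$ for some $0 \le q \le n$. Pushing forward under $\pi$ then shows that every subgroup of $(\Z_{2^n}, \boxplus)$ equals $\pi(2^q\Z) = \Span{2^q}$, and conversely each $\Span{2^q}$ is a subgroup, of order $2^{n-q}$.

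Finally, for the linear ordering I would note that the bijection turns inclusion into divisibility: $\Span{2^q} \subseteq \Span{2^{q'}}$ if and only if $2^q$ is a multiple of $2^{q'}$ in $\Z_{2^n}$, i.e. $q' \le q$. Hence the subgroups form the single chain $\{0\} = \Span{2^n} \subsetneq \Span{2^{n-1}} \subsetneq \cdots \subsetneq \Span{2} \subsetneq \Span{1} = \Z_{2^n}$, which is exactly the asserted linear order.

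There is no genuinely hard step here: the result is just the specialization of the structure theorem for cyclic $p$-groups to $p = 2$. The only point requiring minimal care is getting the direction of the inclusion/divisibility correspondence right, so that a larger exponent $q$ yields a smaller subgroup.
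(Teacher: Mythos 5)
Your proof is correct. Note that the paper itself offers no proof of this lemma: it is recalled as an ``elementary fact'' to be used repeatedly without further mention, so there is no argument to compare against. Your derivation via the correspondence theorem for the quotient $\pi:\Z\to\Z_{2^n}$ and the classification of subgroups of $\Z$ is the standard way to fill in this gap, and your handling of the inclusion--divisibility direction (larger $q$ giving the smaller subgroup $\Span{2^q}$ of order $2^{n-q}$, with the chain terminating at $\Span{2^n}=\{0\}$) is accurate.
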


Now we prove the first property of the group generated by the round functions of an SPNmod cipher. Let
\begin{equation*}
 \mathcal{T}\;\deq\; \mathcal{T}(V)=
  \Set{
     \sigma_k: v\mapsto v\boxplus k \mid k\in V
   }
\end{equation*}
be the group of $\boxplus$-translations on $V$. Note that $\mathcal{T}$ transitively acts on $V$.
\begin{lemma}\label{lem:spanTrho}
\begin{equation*}
  \Gamma_{\infty}
  =
  \Span{
    \mathcal{T}, \rho
    }.
\end{equation*}
In particular,
$\Gamma_{\infty}$ acts transitively on $V$.
\end{lemma}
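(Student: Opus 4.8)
The plan is to unwind the definitions so that the asserted equality reduces to two easy inclusions of generating sets. The first step is to rewrite the generating set of $\Gamma_{\infty}$ using the surjectivity hypothesis on the key schedule. Since $\rho = \gamma\lambda$ is fixed and does not depend on the round key, the generator $\veps_{i,K} = \gamma\lambda\sigma_{K_i}$ equals $\rho\,\sigma_{K_i}$. By assumption the key schedule is surjective with respect to each round, so as $K$ ranges over $\mathcal{K}$ the round key $K_i$ attains every value in $V$. Hence I would record that
\begin{equation*}
  \Gamma_{\infty}
  =
  \Span{\rho\,\sigma_{k}\mid k\in V}.
\end{equation*}

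For the inclusion $\Gamma_{\infty}\subseteq\Span{\mathcal{T},\rho}$, it suffices to observe that each generator $\rho\,\sigma_{k}$ is a product of $\rho$ and of $\sigma_{k}\in\mathcal{T}$, both of which lie in $\Span{\mathcal{T},\rho}$; thus every generator, and therefore all of $\Gamma_{\infty}$, is contained in $\Span{\mathcal{T},\rho}$. For the reverse inclusion $\Span{\mathcal{T},\rho}\subseteq\Gamma_{\infty}$, I first extract $\rho$ itself: taking $k=0$ gives $\sigma_{0}=\mathrm{id}$ because $v\boxplus 0=v$ for all $v$, so $\rho=\rho\,\sigma_{0}\in\Gamma_{\infty}$. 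Once $\rho\in\Gamma_{\infty}$, every translation is recovered as $\sigma_{k}=\rho^{-1}\bigl(\rho\,\sigma_{k}\bigr)\in\Gamma_{\infty}$, whence $\mathcal{T}\subseteq\Gamma_{\infty}$. Since $\Gamma_{\infty}$ is a group containing both $\mathcal{T}$ and $\rho$, it contains the subgroup they generate, proving the second inclusion and hence the claimed equality.

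Finally, the transitivity assertion is immediate: the translation group $\mathcal{T}$ already acts transitively on $V$ (as noted just before the statement), and we have just shown $\mathcal{T}\subseteq\Gamma_{\infty}$, so $\Gamma_{\infty}$ is transitive a fortiori. I do not expect any genuine obstacle in this argument; the only point requiring care is the correct use of the surjectivity of the key schedule to guarantee that $k$ ranges over \emph{all} of $V$ (in particular that $k=0$ is attained), since this is exactly what lets us strip off $\rho$ and isolate the full translation group.
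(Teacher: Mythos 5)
Your proof is correct and follows exactly the paper's argument: the authors likewise set $k=0$ to extract $\rho=\rho\sigma_0\in\Gamma_{\infty}$, then recover each $\sigma_k=\rho^{-1}(\rho\sigma_k)\in\Gamma_{\infty}$, with the reverse inclusion and the transitivity of $\mathcal{T}$ left implicit. Your write-up merely makes explicit the use of the key-schedule surjectivity and the easy inclusion, both of which the paper takes for granted.
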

\begin{proof}
If we set $k=0$, then $\rho\sigma_0=\rho\in\Gamma_{\infty}$, and so $\rho^{-1}\in\Gamma_{\infty}$. Finally for all $k\in V$, we have $\rho^{-1}\rho\sigma_k=\sigma_k\in\Gamma_{\infty}$.
\end{proof}


%
%

\noindent Since the map $v \mapsto \sigma_{v}$ is an isomorphism $(V,
\boxplus) \to \mathcal{T}$, so we have the following well known result
\begin{lemma}[\cite{Cam}]\label{lemma:trivobs}
  The subgroups of $\mathcal{T}$ are of the form 
  \begin{equation*}\label{eq:a-subgroup}
    \Set{ \sigma_{u} : u \in U },
  \end{equation*}
  where $U$ is a subgroup of $(V, \boxplus)$.
\end{lemma}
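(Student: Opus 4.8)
The plan is to deduce the statement directly from the isomorphism $\iota \colon (V, \boxplus) \to \mathcal{T}$, $v \mapsto \sigma_v$, noted immediately before the lemma, together with the general fact that a group isomorphism induces a bijection between the subgroups of its domain and those of its codomain.

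First I would record why $\iota$ is an isomorphism. Unwinding the action, for $v, k, l \in V$ one has $v(\sigma_k \sigma_l) = (v \boxplus k) \boxplus l = v \boxplus (k \boxplus l) = v \sigma_{k \boxplus l}$, so $\sigma_k \sigma_l = \sigma_{k \boxplus l}$ and $\iota$ is a homomorphism; it is injective since $\sigma_k = \sigma_l$ forces $k = 0 \boxplus k = 0 \boxplus l = l$, and it is surjective onto $\mathcal{T}$ by the very definition of $\mathcal{T}$. Hence $\iota$ is an isomorphism of $(V, \boxplus)$ onto $\mathcal{T}$.

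Then I would invoke the subgroup correspondence. Given any subgroup $S \leq \mathcal{T}$, set $U = \iota^{-1}(S) = \Set{u \in V : \sigma_u \in S}$. As the preimage of a subgroup under a homomorphism, $U$ is a subgroup of $(V, \boxplus)$, and because $\iota$ is a bijection we recover $S = \iota(U) = \Set{\sigma_u : u \in U}$. Conversely, for any subgroup $U$ of $(V, \boxplus)$ the image $\iota(U) = \Set{\sigma_u : u \in U}$ is a subgroup of $\mathcal{T}$. This yields exactly the claimed description of the subgroups of $\mathcal{T}$.

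There is no genuine obstacle here: the entire content is the subgroup-correspondence theorem for isomorphisms, and the isomorphism itself is the one already exhibited in the text. The only point requiring any care is to justify both inclusions in $S = \Set{\sigma_u : u \in U}$, which is immediate from the bijectivity of $\iota$; accordingly the proof in the paper (attributed to \cite{Cam}) should reduce to a single line citing this standard correspondence.
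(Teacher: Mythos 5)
Your proposal is correct and matches the paper exactly: the paper gives no proof beyond the remark immediately preceding the lemma that $v \mapsto \sigma_v$ is an isomorphism $(V,\boxplus) \to \mathcal{T}$ (citing \cite{Cam}), and your argument simply makes explicit that isomorphism check plus the standard subgroup correspondence, which is precisely the intended justification.
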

\begin{lemma}[\cite{Cam}]\label{rem1}
  If $\Gamma_{\infty}$ acting on $V$ has  a block system, then this consists of
  the cosets  of a $\boxplus$-subgroup of  $V$, that is, it  is of the
  form
  \begin{equation*}
    \Set{
      W \boxplus v
      :
      v \in V
      },
  \end{equation*}
  where $W$ is a non-trivial, proper subgroup of $(V, \boxplus)$.
\end{lemma}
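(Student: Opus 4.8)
The plan is to reduce the question from the large, complicated group $\Gamma_{\infty}$ to the translation group $\mathcal{T}$, whose block systems are completely transparent because $\mathcal{T}$ acts regularly on $V$. The first step is to observe that $\mathcal{T}$ is a \emph{transitive subgroup} of $\Gamma_{\infty}$: it lies in $\Gamma_{\infty}$ by Lemma \ref{lem:spanTrho}, and it acts transitively on $V$ as noted just before that lemma. Hence, by Lemma \ref{lemma:trans}, any block system $\mathcal{B}$ for $\Gamma_{\infty}$ is automatically a block system for $\mathcal{T}$. This transfer is the crucial move, since it replaces an arbitrary group action by a regular one.

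Next I would analyze the block $W \in \mathcal{B}$ containing $0 \in V$. Because $\mathcal{T}$ is the regular representation of $(V,\boxplus)$, its point stabilizer is trivial, $\mathcal{T}_0 = \{\mathrm{id}\}$. Applying Lemma \ref{lemma:block} to the action of $\mathcal{T}$, the block $W$ is the orbit $0\cdot H$ for some subgroup $H$ with $\{\mathrm{id}\} < H < \mathcal{T}$, where both inclusions are strict precisely because $\mathcal{B}$ is non-trivial (so $W$ is neither a singleton nor all of $V$). By Lemma \ref{lemma:trivobs}, $H = \Set{\sigma_u : u \in U}$ for a subgroup $U$ of $(V,\boxplus)$, and then $W = 0\cdot H = \Set{0\boxplus u : u\in U} = U$. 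Thus $W$ is a $\boxplus$-subgroup of $V$, and it is non-trivial and proper because $H$ is.

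Finally, since $\mathcal{B}$ is $\mathcal{T}$-invariant and $\mathcal{T}$ is transitive, every block is the image $W\sigma_v = W\boxplus v$ of $W$ under some translation $\sigma_v$, and conversely each such translate is again a block; hence $\mathcal{B} = \Set{W\boxplus v : v\in V}$, the coset partition of $W$, which is exactly the asserted form.

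I do not anticipate a genuine obstacle, as the statement is classical (whence the citation to \cite{Cam}); the one point that must be handled with care is the transfer step together with the invocation of regularity, i.e.\ using $\mathcal{T}_0 = \{\mathrm{id}\}$ so that Lemmas \ref{lemma:block} and \ref{lemma:trivobs} combine to identify $W$ with an honest $\boxplus$-subgroup rather than a bare orbit. If one prefers to avoid Lemma \ref{lemma:block}, there is a fully self-contained argument that $W$ is a subgroup: for any $w\in W$, the translate $W\boxplus w$ is a block containing $w\in W$, so it meets $W$ and must coincide with it; this gives closure of $W$ under $\boxplus$, and finiteness then yields the subgroup property directly.
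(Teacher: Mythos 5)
Your proof is correct and is essentially the argument the paper intends: the lemma is stated without proof (citing \cite{Cam}), but the preparatory Lemmas \ref{lemma:trans}, \ref{lemma:block} and \ref{lemma:trivobs} are set up precisely for the reduction you carry out, namely transferring the block system to the transitive (indeed regular) subgroup $\mathcal{T}\leq\Gamma_{\infty}$ and identifying the block through $0$ with a non-trivial proper $\boxplus$-subgroup whose translates exhaust the partition. Your alternative self-contained closure argument (a block meeting its own translate must equal it) is a valid shortcut around Lemma \ref{lemma:block}, but it is a stylistic variant rather than a genuinely different route.
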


 
\subsubsection*{Imprimitivity attack}
  The cryptanalysts' interest into the imprimitivity of the group generated by the
round functions of a block cipher arises from the study performed in \cite{Pat}, where it is
shown how the imprimitivity of the group can be exploited to construct a trapdoor
that may be hard to detect. In particular, the author gives an example of a DES-like
cipher which can be easily broken since its round functions generate an imprimitive
group, but which is resistant to both linear and differential cryptanalysis.
  


\subsection{Some other  definitions and known results}


Now we
will recall some preliminary results proved in \cite{GOST_ric}, and to do so we will adopt the same notation introduced
therein.



\noindent We shall denote
\begin{itemize}
\item a subset of $\F_{2}^{m}$ of cardinality 1 by a \emph{white box};
%

\item a subset of $\F_{2}^{m}$ of cardinality $1<t<2^{m}$ by a \emph{ruled box};
%

\item the full set $\F_{2}^{m}$ by a \emph{black box}.
%
\end{itemize}
We will say that a box has white, ruled or black \emph{type}.

%
%

\begin{definition}\label{def:fortype}
  Let $D$ be a subset of
  \begin{equation*}
    \F_{2}^{n} 
    =
    V_{1} \times V_{2} \times \dots \times V_{\delta},
  \end{equation*}
  where each space $V_{i}$ has dimension $m$.
  The \emph{type}  of $D$ will  be a  sequence of
  $\delta$  white,  ruled  or  black   boxes,  where  the  $i$-th  box
  represents the projection of $D$ on $V_i$.
\end{definition}

\begin{remark}[Remark 4.9 in \cite{GOST_ric}]\label{rem:typesofsubgroups}
 According to Lemma~\ref{lemma:subgroups}, a subgroup  $D$ of
$\mathbb{Z}_{2^{n}}$  is of the form $\Span{2^{q}}$, for some $0 \le q <
  n$. Hence a subgroup $D = \Span{2^{q}}$ of
  $\mathbb{Z}_{2^n}$ has one of the following two types.

  \begin{enumerate}
  \item When $q \equiv 0 \pmod{m}$, the subgroup has $n_w$ white boxes and $\delta-n_w$ black boxes, where $0\leq n_w\leq \delta$ such that $q=n_w m$. Note that there are no white boxes when $q = 0$  (the subgroup is the full group  $\Z_{2^{n}}$), and there are no black boxes when $q = n$ (the subgroup is $\Set{0}$).

\item When $q \not\equiv 0 \pmod{m}$, there is 
  a ruled box which is the box containing the $q$-th bit.
  \end{enumerate}
\end{remark}

\noindent Due to Remark \ref{rem:typesofsubgroups}, we can associate to the type of any subgroup $D$ in $\mathbb{Z}_{2^n}$ the triple $(n_w,n_r,n_b)$, where $n_w,\;n_r$ and $n_b$ are respectively the number of white, ruled and black boxes. We have the following bounds:
\begin{equation}\label{eq: bounds type W}
\left\{
\begin{array}{l}
n_w+n_r+n_b=\delta\\
0\leq n_w\leq \delta\\
0\leq n_r\leq 1\\
0\leq n_b\leq \delta
\end{array}
\right.
\end{equation}
With a slight abuse of notation, we use the triple $(n_w,n_r,n_b)$ to denote the type of $D$.



In the next lemma, proved in \cite{GOST_ric}, we  consider  the behavior of the modular sum $\boxplus$ with
respect to types. 

\begin{lemma}\label{lemma:Wform}
  If  $D$  is  a  subgroup  of $\mathbb{Z}_{2^n}$ and
  $v\in\mathbb{Z}_{2^n}$,  then $D$  and 
  $v\boxplus D$ have the same type.
\end{lemma}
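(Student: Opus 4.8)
The plan is to combine the explicit description of $D$ from Lemma~\ref{lemma:subgroups} with a carry analysis of $\boxplus$, reducing the claim to the observation that the coset $v\boxplus D$ has exactly the same ``fixed low bits / free high bits'' shape as $D$ itself, and then to read off the type directly from that shape.

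First I would invoke Lemma~\ref{lemma:subgroups} to write $D=\Span{2^{q}}$ for some $0\le q\le n$. Viewing the elements of $\mathbb{Z}_{2^n}$ as integers in $\{0,\dots,2^{n}-1\}$, the subgroup $D$ consists precisely of the multiples of $2^{q}$, that is, of those integers whose binary digits in positions $0,\dots,q-1$ all vanish while the digits in positions $q,\dots,n-1$ range freely. This is exactly the description underlying Remark~\ref{rem:typesofsubgroups}.

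The core step is to compute $v\boxplus D$. Write $v=r+2^{q}w$ with $r=v \bmod 2^{q}$ (so $0\le r<2^{q}$) and $0\le w<2^{n-q}$, and write a generic $d\in D$ as $d=2^{q}t$ with $t\in\{0,\dots,2^{n-q}-1\}$. Then
\[
  v\boxplus d = r+2^{q}\bigl((w+t)\bmod 2^{n-q}\bigr),
\]
with no further reduction modulo $2^{n}$ needed since $r<2^{q}$. As $t$ runs over all residues, $(w+t)\bmod 2^{n-q}$ runs over all of $\{0,\dots,2^{n-q}-1\}$, whence
\[
  v\boxplus D=\Set{\,r+2^{q}s : 0\le s<2^{n-q}\,}.
\]
In words, every element of $v\boxplus D$ has its low $q$ bits fixed to the pattern $r$ and its high $n-q$ bits completely free. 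The point here — and the only step I expect to be delicate — is the carry bookkeeping: because each $d\in D$ vanishes in positions $0,\dots,q-1$, adding $v$ produces no carry out of position $q-1$, so the low block is rigidly set to the bits of $r$ while the high block is merely translated, hence permuted, within $\mathbb{Z}_{2^{n-q}}$.

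Finally I would read off the type of $v\boxplus D$ from this shape and compare with $D$ via Remark~\ref{rem:typesofsubgroups}. Writing $q=n_{w}m+s$ with $0\le s<m$: the bricks $V_{1},\dots,V_{n_{w}}$ lie entirely in the fixed low block, so their projections are singletons (white boxes) for both $D$ and $v\boxplus D$; the bricks above position $q$ lie entirely in the free high block, so their projections are the full space (black boxes) for both. If $s=0$ there is nothing more to check. If $s>0$, the brick $V_{n_{w}+1}$ straddles position $q$: its low $s$ bits are fixed and its high $m-s$ bits are free, so its projection has cardinality $2^{m-s}$ with $1<2^{m-s}<2^{m}$, a ruled box in both cases. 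Thus in every brick the box type coincides for $D$ (the case $r=0$) and for $v\boxplus D$, which is precisely the assertion that the two subsets have the same type.
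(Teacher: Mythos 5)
Your proof is correct: the decomposition $v=r+2^{q}w$ with $0\le r<2^{q}$, the observation that no carry leaves the low block because every element of $\langle 2^{q}\rangle$ vanishes in bit positions $0,\dots,q-1$, the resulting description $v\boxplus D=\{r+2^{q}s : 0\le s<2^{n-q}\}$, and the brick-by-brick reading of the type (white bricks below position $q$, black bricks above, and a ruled straddling brick of projection size $2^{m-s}$ when $q\not\equiv 0\pmod{m}$) are all sound and match Remark~\ref{rem:typesofsubgroups} exactly. Note that the paper does not reproduce a proof of this lemma, deferring to \cite{GOST_ric}; your carry analysis is essentially the standard argument behind the cited result, so your write-up is a complete, self-contained substitute for it.
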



\section{Type-preserving matrices}\label{sec:typepres}

In this section we study the diffusion properties of an invertible mixing layer $\lambda$, namely how the multiplication by a full-rank binary matrix $\Lambda$ mixes the bricks
$V_1,\ldots,V_\delta$. To do so, we consider $\Lambda$ to be a $\delta\times \delta$ block matrix whose blocks are binary square matrices of order $m$:
$$
\Lambda=\begin{bmatrix}
\Lambda_{1,1} &\cdots & \Lambda_{1,\delta}\\
\vdots & & \vdots\\
\Lambda_{\delta,1} & \cdots & \Lambda_{\delta,\delta}
\end{bmatrix}.
$$
We will also use the notation $\Lambda_{(i_1,j_1):(i_2,j_2)}$ for the submatrices of $\Lambda$:
$$
\Lambda_{(i_1,j_1):(i_2,j_2)}:=\
\begin{bmatrix}
\Lambda_{i_1,j_1}&\cdots&\Lambda_{i_1,j_2}\\
\vdots &&\vdots\\
\Lambda_{i_2,j_1}&\cdots&\Lambda_{i_2,j_2}
\end{bmatrix}.
$$
Observe that if $\Lambda_{i,j}=0$ whenever $i\neq j$, i.e. $\Lambda$ is a diagonal block matrix, then $\gamma\lambda$ is a parallel map. 
\\
Our interest lies in the image of $D\subseteq\F_2^n$ through the mixing layer, thus we will work with the set $\mathrm{Im}_{|_D}\lambda=\{v\Lambda\;: v\in D\}$. In many cases we will need to work with submatrices of $\Lambda$, and for the sake of simplicity we will write $\mathrm{Im}_{|_D}\Lambda_{(i_1,j_1):(i_2,j_2)}$ to denote the restriction of the image $\mathrm{Im}\left(\Lambda_{(i_1,j_1):(i_2,j_2)}\right)$ to the set obtained by projecting $D$ on the coordinates corresponding to the boxes $j_1,\ldots,j_2$.\\
We will study which properties of $\Lambda$ imply
\begin{equation}\label{eq: claim}
\mathtt{type}\left(\mathrm{Im}_{|_D}\lambda\right)= \mathtt{type}(D) .
\end{equation}
\begin{definition}\label{def:typepres}
A matrix $\Lambda\in\mathrm{GL}(\F_2^n)$, or equivalently the corresponding mixing layer $\lambda$, satisfying equation \eqref{eq: claim} for any $D\subseteq\F_2^n$, is called \emph{type-preserving}. Vice versa, if $\Lambda$ is not type-preserving, then we say that it is  \emph{non-type-preserving}.
\end{definition}
\begin{remark}
In Section \ref{sec:app} we prove that the non-type-preserving property of a mixing layer given in the previous definition is useful to avoid imprimitivity attacks on block ciphers with the following structure:
\begin{itemize}
\item SPN with addition $2^n$ as key mixing function (Theorem \ref{secSPNmod}),
\item GOST-like with addition $2^{n/2}$ as key mixing function and invertible S-Boxes (Theorem \ref{secGOSTlike}),
\end{itemize}
where $n$ is the length of the whole block.
\end{remark}


In this paper we are mainly  interested in the subsets $D$ of $\F_{2}^{n}$, such as the subgroups of $\Z_{2^n}$, with  type $(n_w,n_r,n_b)$ satisfying  equation \eqref{eq: bounds type W}. Therefore in the remaining part of this section the subsets $D$ of $\F_2^n$ are all of this kind. Observe that any $v\in D$ can be written as the concatenation $(v_w|v_r|v_b)$, where the lengths of $v_w$, $v_r$ and $v_b$ are determined by the type of $D$. In particular, $v_w\in\mathbb{F}_2^{mn_w}$, $v_r\in\mathbb{F}_2^{mn_r}$ and $v_b\in\mathbb{F}_2^{mn_b}$, with the following properties due to the structure of $D$:
\begin{equation}\nonumber
\left\{
\begin{array}{l}
\left|\{v_w\;: \exists v=(v_w|v_r|v_b)\in D\}\right|=1\\
2\leq \left|\{v_r\;: \exists v=(v_w|v_r|v_b)\in D\}\right|\leq 2^{mn_r}-1\\
\left|\{v_b\;: \exists v=(v_w|v_r|v_b)\in D\}\right|=2^{mn_b}.
 \end{array}
 \right.
 \end{equation}
 
 
 Now we can state our main result, whose proof is a consequence of several lemmas. 
 \begin{theorem}\label{thm: properties of M}
The mixing layer $\lambda$ is type-preserving with respect to the subsets of $\F_{2}^{n}$ with  type $(n_w,n_r,n_b)$ satisfying  equation \eqref{eq: bounds type W} if and only if there exists an integer $n_w\in\{0,\ldots,\delta\}$ for which either equation 
\begin{equation}\label{eq: property for nonruled}
\Lambda_{(n_w+1,1):(\delta,n_w)}=0
\end{equation} 
or the following four properties 
\begin{enumerate}[(a)]
\item\label{first property} $\Lambda_{(n_w+2,1):(\delta,n_w)}=0$,
\item\label{second property} $\Lambda_{(n_w+1,1):(n_w+1,n_w)}$ is not a full-rank matrix,
\item\label{third property} $2\leq \left|\mathrm{Im}_{|_{D}}\left(\Lambda_{(n_w+1,n_w+1):(\delta,n_w+1)}\right)\right|\; < \;2^{m\left(\delta-n_w-1\right)},$
\item\label{fourth property} $\left|\mathrm{Im}_{|_{D}}\left(\Lambda_{(n_w+1,n_w+2):(\delta,\delta)}\right)\right|\; =\;2^{m\left(\delta-n_w-1\right)}$
\end{enumerate}  
are satisfied.
\end{theorem}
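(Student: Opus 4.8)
The plan is to read off the type of $\mathrm{Im}_{|_D}\lambda=\{v\Lambda:v\in D\}$ brick by brick and compare it with the type of $D$. Writing $v=(v_w\,|\,v_r\,|\,v_b)\in D$ and recalling that $\Lambda$ acts on the right, the $j$-th output brick is $(v\Lambda)_j=\sum_{i=1}^{\delta}v_i\Lambda_{i,j}$, which I would split into a \emph{white} term $\sum_{i\le n_w}v_i\Lambda_{i,j}$ (constant on $D$), a \emph{ruled} term $v_r\Lambda_{n_w+1,j}$ (present only when $n_r=1$), and a \emph{black} term $\sum_{i\ge n_w+2}v_i\Lambda_{i,j}$ (ranging over everything). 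Each white term is a fixed vector that enters output brick $j$ as an XOR-translation of $\F_2^m$; since a translation is a bijection of the brick it never changes a projection's cardinality, so the additive white contribution is irrelevant to the type and may be discarded. Type-preservation then amounts to forcing the output sequence to be white on bricks $1,\dots,n_w$, ruled on brick $n_w+1$, and black on bricks $n_w+2,\dots,\delta$. I would fix $n_w$ to be the number of white boxes and argue necessity and sufficiency separately in the two regimes $n_r=0$ and $n_r=1$; the first regime yields \eqref{eq: property for nonruled} and the second yields properties \ref{first property}--\ref{fourth property}.

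In the non-ruled regime $n_r=0$ one has $D=\{v_w\}\times\F_2^{mn_b}$ with $n_b=\delta-n_w$, so the black blocks $v_{n_w+1},\dots,v_\delta$ are jointly free. A white output brick $j\le n_w$ is constant on $D$ if and only if the entire block column below the diagonal vanishes there, i.e.\ $\Lambda_{i,j}=0$ for every $i\ge n_w+1$; ranging over $j\le n_w$ this is exactly \eqref{eq: property for nonruled}. I would then note that \eqref{eq: property for nonruled} makes $\Lambda$ block upper triangular, whence $\det\Lambda=\det\Lambda_{(1,1):(n_w,n_w)}\cdot\det\Lambda_{(n_w+1,n_w+1):(\delta,\delta)}$, and invertibility of $\Lambda$ forces the bottom-right corner $\Lambda_{(n_w+1,n_w+1):(\delta,\delta)}$ to be invertible; the map $v_b\mapsto v_b\,\Lambda_{(n_w+1,n_w+1):(\delta,\delta)}$ is then a bijection and the black output bricks jointly fill $\F_2^{mn_b}$, so the type is preserved. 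Conversely, if \eqref{eq: property for nonruled} fails, some free black block reaches a white output brick and makes it non-constant, destroying the type. This closes the $n_r=0$ case.

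In the ruled regime $n_r=1$ the ruled block $v_r$ ranges over a set $R$ with $2\le|R|\le 2^m-1$. Constancy of the white output bricks splits into two requirements: the black blocks must not reach them, giving property \ref{first property} ($\Lambda_{(n_w+2,1):(\delta,n_w)}=0$); and the ruled contribution $v_r\,\Lambda_{(n_w+1,1):(n_w+1,n_w)}$ must be constant on $R$. Because $|R|\ge 2$, picking two distinct elements of $R$ shows that their difference lies in the kernel of $v_r\mapsto v_r\,\Lambda_{(n_w+1,1):(n_w+1,n_w)}$, so this map cannot be injective; equivalently $\Lambda_{(n_w+1,1):(n_w+1,n_w)}$ is not full-rank, which is property \ref{second property}. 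The output brick $n_w+1$ must be genuinely ruled: up to the discarded white shift its value set is $\mathrm{Im}_{|_D}(\Lambda_{(n_w+1,n_w+1):(\delta,n_w+1)})$, and requiring its cardinality to be neither $1$ (white) nor maximal (black) is precisely property \ref{third property}. Finally the black output bricks must jointly fill their space, which is the surjectivity statement of property \ref{fourth property}.

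For sufficiency I would assume \ref{first property}--\ref{fourth property} and reconstruct the type region by region: \ref{first property} and \ref{second property} keep bricks $1,\dots,n_w$ constant, \ref{third property} makes brick $n_w+1$ ruled, and \ref{fourth property} makes bricks $n_w+2,\dots,\delta$ black, the additive white shift being absorbed as above. The delicate point --- and the step I expect to be the main obstacle --- is the coupling created by the single ruled block $v_r$, which feeds the white region and the ruled brick simultaneously. One must check that the rank-deficiency of \ref{second property} is compatible with, and correctly coordinated with, the support of $R$ so that the white bricks really stay constant while the image in brick $n_w+1$ stays strictly between $1$ and full as demanded by \ref{third property}; and one must use the block triangularity coming from \ref{first property} together with invertibility of $\Lambda$ to upgrade a merely large image to genuine surjectivity in \ref{fourth property}. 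Tracking these rank-versus-cardinality bookkeeping steps carefully, rather than the essentially routine region-by-region matching, is where the real work of the proof lies.
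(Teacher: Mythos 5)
Your overall route is the same as the paper's: the same case split by type, the same two-vector/kernel arguments for necessity (cf.\ Lemma \ref{lem: necessary nonruled} and Lemma \ref{lem: necessary ruled}), and the same block-triangularity-plus-invertibility argument for sufficiency in the non-ruled regime (cf.\ Lemma \ref{lem: sufficient nonruled}). The genuine gap sits exactly at the point you flag and then defer: sufficiency of properties \eqref{first property}--\eqref{fourth property} in the ruled regime. Moreover, as you have phrased it, the step would actually fail: your plan says ``assume \eqref{first property}--\eqref{fourth property} and reconstruct the type region by region'' for a set $D$ of type $(n_w,1,\delta-n_w-1)$, but these properties do \emph{not} force the white output bricks to be constant for an \emph{arbitrary} such $D$. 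Property \eqref{second property} only guarantees that the map $v_r\mapsto v_r\,\Lambda_{(n_w+1,1):(n_w+1,n_w)}$ has a nontrivial kernel; if the ruled set $R$ of $D$ contains two elements not congruent modulo that kernel, the first $n_w$ output bricks are non-constant and the type is destroyed. So universal sufficiency over all ruled-type $D$ is false, and the correct statement is existential in $D$.

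The missing idea is the paper's one-line resolution (Lemma \ref{lem: sufficient ruled}, and the ``Conversely'' clauses of Lemmas \ref{lem: 01delta} and \ref{lem: delta10}): one \emph{chooses} $D$ so that its ruled box lies in $\ker\Lambda_{(n_w+1,1):(n_w+1,n_w)}$, which is nontrivial precisely by \eqref{second property}. With this choice the ruled contribution to bricks $1,\dots,n_w$ vanishes identically, property \eqref{first property} kills the black contribution there, and properties \eqref{third property} and \eqref{fourth property} --- which are stated relative to this $D$, note that $D$ appears in the theorem's statement of \eqref{third property} and \eqref{fourth property} --- then deliver the ruled brick $n_w+1$ and the $\delta-n_w-1$ black bricks. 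Without this explicit choice, the ``coordination'' you invoke between the rank deficiency of \eqref{second property} and the support of $R$ has no content, and the bookkeeping you postpone cannot be completed in the universal form you state. (A minor further omission: you fold the boundary cases $n_w=0$ and $n_w=\delta-1$ into the general ruled regime, whereas the properties degenerate there --- for $n_w=0$ items \eqref{first property} and \eqref{second property} are vacuous, and for $n_w=\delta-1$ item \eqref{fourth property} degenerates and \eqref{third property} becomes $\Lambda_{\delta,\delta}\neq 0$ --- which is why the paper treats them separately in Lemmas \ref{lem: 01delta} and \ref{lem: delta10}.)
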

 
\begin{proof}
\noindent By equation \eqref{eq: bounds type W} we have four cases:
\begin{enumerate}
\item\label{case 1} $\mathtt{type}(D)=(0,0,\delta)$
\item\label{case 2} $\mathtt{type}(D)=(\delta,0,0)$
\item\label{case 3} $\mathtt{type}(D)=(n_w,0,\delta-n_w)$
\item\label{case 4} $\mathtt{type}(D)=(n_w,1,\delta-n_w-1)$
\end{enumerate}
Cases \ref{case 1} and \ref{case 2} are trivial, namely all invertible linear maps, i.e. all full-rank matrices, preserve these types: $\mathtt{type}(D)=(\delta,0,0)$ implies $\mathtt{type}\left(\mathrm{Im}_{|_D}\lambda\right)=(\delta,0,0)$, and $\mathtt{type}\left(D\right)=(0,0,b)$ implies $\mathtt{type}\left(\mathrm{Im}_{|_D}\lambda\right)=(0,0,\delta)$. 
We will focus on the remaining two cases, starting by case \ref{case 3}.
\begin{lemma}\label{lem: necessary nonruled}
Let $\mathtt{type}(D)=\mathtt{type}\left(\mathrm{Im}_{|_D}\lambda\right)=(n_w,0,\delta-n_w)$, where $n_w\in\{1,\ldots,\delta-1\}$. Then 
\begin{equation}
\Lambda_{(n_w+1,1):(\delta,n_w)}=0.
\end{equation}
\end{lemma}
\begin{proof}[Proof of Lemma \ref{lem: necessary nonruled}]
We assume $\mathtt{type}(D)=(n_w,0,\delta-n_w)$ and $\Lambda_{(n_w+1,1):(\delta,n_w)}\neq0$. We consider two vectors $v=(v_w|v_b)$ and $v'=(v_w'|v_b')$ in $D$, with $v\in\mathrm{ker}\left(\Lambda_{(n_w+1,1):(\delta,n_w)}\right)$ while $v'$ is outside of it. Observe that the structure of $D$ implies that $v_w=v_w'$, and by applying $\lambda$ to both we obtain $v\Lambda=(v_w\Lambda_{(1,1):(n_w,n_w)}|0)$ and $v'\Lambda=v\Lambda\oplus(0|v_b\Lambda_{(n_w+1,1):(\delta,n_w)})$, here in both cases 0 denotes a string of $n_b$ zeros. Since the two vectors are different, $\mathtt{type}\left(\mathrm{Im}_{|_D}\lambda\right)\neq (n_w,0,\delta-n_w)$, which contradicts the hypotheses of the Lemma.
\end{proof}
The above lemma gives us a necessary property on $\Lambda$ to have a mixing layer which preserves the type $(n_w,0,\delta-n_w)$. The next result assures  that this is also sufficient.
\begin{lemma}\label{lem: sufficient nonruled}
Let $n_w\in\{1,\ldots,\delta-1\}$ and $\Lambda_{(n_w+1,1):(\delta,n_w)}=0$. Then $\lambda$ preserves the type $(n_w,0,\delta-n_w)$.
\end{lemma}
\begin{proof}[Proof of Lemma \ref{lem: sufficient nonruled}]
We construct $D$ so that its type would be $(n_w,0,\delta-n_w)$. Then, any vector $v\in D$ can be written as a concatenation $(v_w|v_b)$, where $v_w$ is fixed, while 
\begin{equation}\label{eq: dimension black}
\{v_b \;:\;\exists v=(v_w|v_b)\in D\}=\mathbb{F}_2^{\delta-n_w} .
\end{equation} 
Due to $\Lambda_{(n_w+1,1):(\delta,n_w)}$ being the zero matrix, the first $mn_w$ bits of the image of any $v\in D$ are equal to $v_w\Lambda_{(1,1):(n_w,n_w)}$, hence the first $n_w$ boxes of $\mathrm{Im}_{|_D}\lambda$ are white. On the other hand, since $\lambda$ is invertible, $\Lambda$ has full rank, which can only be possible  by assuming that  $\Lambda_{(n_w+1,n_w+1):(\delta,\delta)}$ is invertible. By equation \eqref{eq: dimension black}, we therefore have $\{v_b\Lambda_{(n_w+1,n_w+1):(\delta,\delta)}\;:\; \exists v=(v_w|v_b)\in D\}=\mathbb{F}_2^{mn_w}$, from which we conclude that $\mathtt{type}\left(\mathrm{Im}_{|_D}\lambda\right)=(n_w,0,\delta-n_w)$.
\end{proof}

Note that in Lemma \ref{lem: necessary nonruled} and Lemma \ref{lem: sufficient nonruled} we did not consider the cases $n_w=0$ and $n_w=\delta$, because they respectively correspond to the cases \ref{case 1} and \ref{case 2} which we have already discussed.

At last,  the case \ref{case 4}, $\mathtt{type}(D)=(n_w,1,\delta-n_w-1)$.
\begin{lemma}\label{lem: necessary ruled}
Let both $D$ and $\mathrm{Im}_{|_D}\lambda$ be of type $(n_w,1,\delta-n_w-1)$, where $n_w\in\{1,\ldots,\delta-2\}$.
Then $\Lambda$ satisfies the following properties:
\begin{enumerate}[(a)]
\item
$\Lambda_{(n_w+2,1):(\delta,n_w)}=0$,
\item
$\Lambda_{(n_w+1,1):(n_w+1,n_w)}$ is not a full-rank matrix,
\item
$2\leq \left|\mathrm{Im}_{|_{D}}\left(\Lambda_{(n_w+1,n_w+1):(\delta,n_w+1)}\right)\right|\; < \;2^{m\left(\delta-n_w-1\right)}$,
\item
$\left|\mathrm{Im}_{|_{D}}\left(\Lambda_{(n_w+1,n_w+2):(\delta,\delta)}\right)\right|\; =\;2^{m\left(\delta-n_w-1\right)}$.
\end{enumerate} 
\end{lemma}
\begin{proof}[Proof of Lemma \ref{lem: necessary ruled}]
We proceed in four steps, assuming each time that a property among \eqref{first property}, \eqref{second property}, \eqref{third property} and \eqref{fourth property} would not be necessary.
We use again the notation $v=(v_w|v_r|v_b)$, where the length of the three vectors depends on the type of $D$, and we recall that $v_w$ is the same for each $v\in D$.
\\
Firstly, we look at what happens if we deny property \eqref{first property}.  In this case, we consider $v=(v_w|v_r|v_b)$ and $v'=(v_w|v_r|v_b')$ in $D$ with $v_b\in\mathrm{ker}\Lambda_{(n_w+2,1):(\delta,n_w)}$ and $v_b'\notin\mathrm{ker}\Lambda_{(n_w+2,1):(\delta,n_w)}$. It follows that the first $mn_w$ bits of $v_b\Lambda$ are different from the first $mn_w$ bits in $v_b\Lambda$, hence  the first $n_w$ boxes in $\mathrm{Im}_{|_D}\lambda$ are not white, and so the type of $D$ is not $(n_w,1,\delta-n_w-1)$.
\\
Similarly, if we deny the second property, we have the same conclusion by choosing $v=(v_w|v_r|v_b)$ and $v'=(v_w|v_r'|v_b)$, with $v_r\neq v_r'$.
\\
We do not go through the entire proofs  of Properties \eqref{third property} and \eqref{fourth property}, since they are quite similar to what we already did above. The difference is that we need to use the entire $D$ instead of just two vectors $v$ and $v'$, and therefore prove that $\mathrm{Im}_{|_{D}}\lambda$ does not have respectively a ruled box (by denying property \eqref{third property}) and the right number of black boxes (by denying property \eqref{fourth property}).
\end{proof}
As we did for Lemma \ref{lem: necessary nonruled}, we can also prove that the four necessary properties in Lemma \ref{lem: necessary ruled} are also sufficient.
\begin{lemma}\label{lem: sufficient ruled}
Let $\Lambda$ be a matrix satisfying the four properties in Lemma \ref{lem: necessary ruled} for a certain integer $n_w\in\{1,\ldots,\delta-2\}$. Then $\lambda$ preserve the type $(n_w,1,\delta-n_w-1)$.
\end{lemma}
\begin{proof}[Proof of Lemma \ref{lem: sufficient ruled}]
We consider $D$ of type $(n_w,1,\delta-n_w-1)$, where its ruled box is the kernel of the matrix $\Lambda_{(n_w+1,1):(n_w+1,n_w)}$.
\end{proof}


 Observe that in Lemma \ref{lem: necessary ruled} and Lemma \ref{lem: sufficient ruled} we did not consider $n_w=0$ and $n_w=\delta-1$. We discuss these cases in the following two results.

\begin{lemma}\label{lem: 01delta}
Let $\mathtt{type}(D)=\mathtt{type}\left(\mathrm{Im}_{|_D}\lambda\right)=(0,1,\delta-1)$. Then $\Lambda$ satisfies Properties \eqref{third property} and \eqref{fourth property} of Lemma \ref{lem: necessary nonruled}, namely
\begin{enumerate}[(a)]
\setcounter{enumi}{2}
\item\label{third property 01delta} $2\leq \left|\mathrm{Im}_{|_{D}}\left(\Lambda_{(1,1):(\delta,1)}\right)\right|\; < \;2^{m\left(\delta-1\right)}$, and
\item\label{fourth property 01delta} $\left|\mathrm{Im}_{|_{D}}\left(\Lambda_{(1,2):(\delta,\delta)}\right)\right|\; =\;2^{m\left(\delta-1\right)}$.
\end{enumerate} 
Conversely, if there exist $D$ of type $(0,1,\delta-1)$ for which $\Lambda$ satisfies the two properties above, then $\Lambda$ preserves the type of $D$.
\end{lemma}
\begin{lemma}\label{lem: delta10}
Let $\mathtt{type}(D)=\mathtt{type}\left(\mathrm{Im}_{|_D}\lambda\right)=(\delta-1,1,0)$. Then $\Lambda$ satisfies Properties \eqref{second property} and \eqref{third property} of Lemma \ref{lem: necessary nonruled}, namely
\begin{enumerate}[(a)]
\setcounter{enumi}{1}
\item\label{second property delta10} $\Lambda_{(\delta,1):(\delta,\delta-1)}$ is not a full-rank matrix, and
\item\label{third property delta10} $\Lambda_{\delta,\delta}\neq 0$.
\end{enumerate} 
Conversely, if $\Lambda$ satisfies the two properties above, then there exists $D$ whose type is preserved by $\Lambda$.
\end{lemma}
Note that the properties described in Lemmas \ref{lem: 01delta} and \ref{lem: delta10} are particular cases of the ones presented in Lemma \ref{lem: necessary ruled}. We omit the proofs of these lemmas, since they can be obtained using the same arguments applied to prove Lemma \ref{lem: necessary ruled} and Lemma \ref{lem: sufficient ruled}. Hence, we denoted the new properties in the same way, and, with a slight abuse of notation, in the following we will simply refer to Lemma \ref{lem: necessary ruled} and its properties, even though when speaking of types $(0,1,\delta-1)$ and $(\delta-1,1,0)$ we should be careful and use the dedicated results.

Putting everything together, we obtain the proof of Theorem \ref{thm: properties of M} as a straightforward consequence of Lemmas \ref{lem: necessary nonruled}, \ref{lem: sufficient nonruled}, \ref{lem: necessary ruled}, \ref{lem: sufficient ruled}, \ref{lem: 01delta} and \ref{lem: delta10}. 
\end{proof}


We remark that many matrices often used to obtain mixing layers are non-type-preserving, simply because they usually do not satisfy property \eqref{first property} of Lemma \ref{lem: necessary ruled}.
\begin{corollary}\label{lem:anontypepres}
If $\Lambda_{(n_w+2,1):(\delta,n_w)}\ne 0$, for any $n_w\in\{1,\ldots,\delta-2\}$, then $\Lambda$ is non-type-preserving.
\end{corollary}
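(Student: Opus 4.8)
The plan is to exploit that ``non-type-preserving'' is, by Definition~\ref{def:typepres}, simply the negation of a universally quantified statement: $\Lambda$ fails to be type-preserving as soon as there exists a \emph{single} subset $D$ with $\mathtt{type}(\mathrm{Im}_{|_D}\lambda)\ne\mathtt{type}(D)$. Consequently I do not need to control the behaviour of $\lambda$ for every admissible $n_w$ at once; it suffices to manufacture one offending $D$ from the hypothesis.

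First I would fix an index $n_w\in\{1,\ldots,\delta-2\}$ for which $\Lambda_{(n_w+2,1):(\delta,n_w)}\ne 0$, as provided by the assumption. The one genuinely needed observation is a bookkeeping one: the block $\Lambda_{(n_w+2,1):(\delta,n_w)}$ is obtained from $\Lambda_{(n_w+1,1):(\delta,n_w)}$ by deleting its top block-row, so every entry of the former is also an entry of the latter. Hence $\Lambda_{(n_w+2,1):(\delta,n_w)}\ne 0$ forces $\Lambda_{(n_w+1,1):(\delta,n_w)}\ne 0$.

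Next I would invoke the necessary condition already established. Pick any $D$ of type $(n_w,0,\delta-n_w)$ --- such sets plainly exist, e.g.\ fix the first $n_w$ bricks to a constant and let the remaining $\delta-n_w$ bricks range freely over $\F_2^{m(\delta-n_w)}$. Since $n_w\in\{1,\ldots,\delta-2\}\subseteq\{1,\ldots,\delta-1\}$, Lemma~\ref{lem: necessary nonruled} applies and tells us that if $\lambda$ preserved the type of $D$ then $\Lambda_{(n_w+1,1):(\delta,n_w)}=0$. As we have just shown this block is nonzero, the contrapositive yields $\mathtt{type}(\mathrm{Im}_{|_D}\lambda)\ne(n_w,0,\delta-n_w)$, so $\lambda$ does not preserve the type of $D$ and $\Lambda$ is therefore non-type-preserving. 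Equivalently, one may work with the ruled type $(n_w,1,\delta-n_w-1)$: property~\eqref{first property} of Lemma~\ref{lem: necessary ruled} is precisely $\Lambda_{(n_w+2,1):(\delta,n_w)}=0$, which fails, so that type cannot be preserved either; this is the route hinted at by the remark preceding the statement.

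I expect no serious obstacle: the substantive content is already contained in Theorem~\ref{thm: properties of M} and Lemmas~\ref{lem: necessary nonruled} and~\ref{lem: necessary ruled}, and the corollary is essentially their contrapositive specialised to a single type. The only points requiring care are logical rather than computational --- namely, resisting the temptation to rule out type preservation for \emph{all} $n_w$ and all types (which is unnecessary once one recalls that non-type-preserving is an existential condition), and checking that the chosen $n_w$ lies in the index range required by whichever lemma is invoked.
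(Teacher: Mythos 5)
Your mechanical steps coincide with the paper's: the one observation the paper's proof actually records is your ``bookkeeping'' one, namely that $\Lambda_{(n_w+2,1):(\delta,n_w)}$ is a submatrix of $\Lambda_{(n_w+1,1):(\delta,n_w)}$, so that the hypothesis simultaneously kills equation \eqref{eq: property for nonruled} and property \eqref{first property}; and the ``alternative route'' you mention via the ruled type is in fact the paper's route, since the paper concludes through Theorem~\ref{thm: properties of M} rather than through Lemma~\ref{lem: necessary nonruled} alone. The genuine divergence is in the quantifiers, and there your reading works against you. In the paper's usage, ``for any $n_w\in\{1,\ldots,\delta-2\}$'' means \emph{for every} such $n_w$, and the proof needs that universality: Theorem~\ref{thm: properties of M} makes type-preservation an \emph{existential} condition over $n_w$ (``there exists an integer $n_w$ for which \dots''), so its negation requires that equation \eqref{eq: property for nonruled} fails and the conjunction \eqref{first property}--\eqref{fourth property} fails at \emph{every} admissible $n_w$, not at one. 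Your proposal instead fixes a single $n_w$, produces a single offending $D$, and declares victory by reading ``non-type-preserving'' off Definition~\ref{def:typepres} as a purely existential condition.

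That shortcut is defensible only against the literal wording of Definition~\ref{def:typepres}, and it is inconsistent with the notion the corollary must deliver downstream. In the proof of Theorem~\ref{secSPNmod}, ``$\lambda$ non-type-preserving'' is used to infer that for \emph{every} non-trivial proper subgroup $D$ of $\Z_{2^n}$ the sets $D\gamma$ and $(D\gamma)\lambda$ have different types; under your existential reading, a matrix could change the type of one set of type $(n_w,0,\delta-n_w)$ while still preserving, say, a set of type $(0,1,\delta-1)$ as in Lemma~\ref{lem: 01delta}, and the primitivity argument would collapse for precisely the subgroup whose type survives. So the step you explicitly flag as a ``temptation to resist'' --- ruling out preservation for all $n_w$ --- is exactly what the paper's proof does over the range $\{1,\ldots,\delta-2\}$, and omitting it leaves you proving a strictly weaker statement than the one the corollary is invoked for. (To be fair, the paper itself is silent about the boundary witnesses $n_w=0$ and $n_w=\delta-1$ of Lemmas~\ref{lem: 01delta} and~\ref{lem: delta10}, which the hypothesis does not exclude either --- compare the rotation proposition, where those cases are checked separately --- but your proposal compounds, rather than repairs, that looseness.)
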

\begin{proof}
By Theorem \ref{thm: properties of M}, if both equation \eqref{eq: property for nonruled} and property \eqref{first property} are not satisfied, then $\Lambda$ is non-type-preserving. Note that $\Lambda_{(n_w+2,1):(\delta,n_w)}$ is a submatrix of $\Lambda_{(n_w+1,1):(\delta,n_w)}$, so $\Lambda_{(n_w+2,1):(\delta,n_w)}\ne 0$ implies $\Lambda_{(n_w+1,1):(\delta,n_w)}\ne 0$.
\end{proof}
In the next section we show how some known families of mixing layers are non-type-preserving with respect to the subsets of $\F_{2}^{n}$ with  type $(n_w,n_r,n_b)$ satisfying  equation \eqref{eq: bounds type W}.

\subsection{Examples of non-type-preserving mixing layers}
In this section we characterize some known classes of mixing layers by proving whether they are non-type-preserving with respect to the subsets of $\F_2^n$ whose type satisfy equation \eqref{eq: bounds type W}. 
The aim of this section is to highlight that the definition of non-type-preserving mixing layer is not restrictive. Indeed, in many real-life ciphers, such as GOST, PRESENT and AES, such kind of mixing layers are used. With a slight abuse of notation, any of these mixing layers will simply be denoted as non-type-preserving.

\subsubsection*{Rotation of a GOST-like cipher}
In   \cite{GOST_ric}, the mixing layer of a $\mathrm{GOST}$-like cipher is defined as the permutation matrix $\Lambda_s$ with $s\in\{m,\ldots,(\delta-1)m\}$.

\noindent Let $\{\mathbf{e}_1,\ldots,\mathbf{e}_n\}$ be the canonical basis of $\F_2^n$. 
\begin{definition}
Let $\pi_s\in\mathrm{Sym}(n)$ be the permutation defined by
$$
\pi_s=\left(\begin{array}{cccc}
1 & 2 & \ldots & n \\
\pi_s(1) & \pi_s(2) & \ldots & \pi_s(n)
\end{array}\right) 
$$
such that, for each $1\leq x \leq n$,
\begin{equation}\label{def:pi}
\pi_s(x)=x+s \mod n
\end{equation}
where $m\leq s \leq (\delta-1)m$.

\noindent The permutation binary matrix associated to $\pi$ is the following circulant matrix
$$
\Lambda_s=\begin{bmatrix}
\mathbf{e}_{\pi_s(1)} \\ \vdots \\ \mathbf{e}_{\pi_s(n)} 
\end{bmatrix}.
$$

\end{definition}


\begin{example}
In the case of $\mathrm{GOST}$, the actual values of the parameters are: $n =
32$, $m = 4$, $\delta= 8$ and $s = 11$. The right rotation by 11 bits of the $\mathrm{GOST}$ cipher is the permutation matrix associated to the following permutation of $32$ bits:
$$
\pi_{11}=\left(\begin{array}{cccc}
1 & 2 & \ldots & 32 \\
12 & 13 & \ldots & 11
\end{array}\right).
$$

\noindent The mixing layer associated to $\pi_{11}$ is
$$
\Lambda_{\mathrm{GOST}}=\Lambda_{11}=\begin{bmatrix}
0 & \mathbb{1}_{21} \\
\mathbb{1}_{11} & 0
\end{bmatrix},
$$
where we denote the $r\times t$ zero matrix by $0$ and the $t\times t$ identity matrix by $\mathbb{1}_{t}$.
\end{example}

\begin{proposition}
Let $\Lambda_s$ be a binary circulant permutation matrix associated to the rotation of $s$ bits. Then $\Lambda_s$ is non-type-preserving if and only if $m\leq s\leq m(\delta-1)$.
\end{proposition}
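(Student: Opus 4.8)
The plan is to exploit that $\Lambda_s$ acts on coordinates by the cyclic shift $(v\Lambda_s)_y=v_{y-s}$ (indices mod $n$), so that for every $D$ the image $\mathrm{Im}_{|_D}\lambda$ is just $D$ shifted by $s$ places. I write $s=am+b$ with $0\le b<m$, so that $m\le s\le(\delta-1)m$ is exactly $1\le a\le\delta-1$, and record the brick-level effect: output brick $B$ collects $m-b$ of its coordinates from input brick $B-a$ and the remaining $b$ from input brick $B-a-1$ (indices mod $\delta$). By the lemmas of this section (summarised in Theorem~\ref{thm: properties of M}), $\lambda$ preserves a given nontrivial type exactly when \emph{some} subset of that type is sent to a subset of the same type; hence $\Lambda_s$ is non-type-preserving precisely when no nontrivial type is preserved, and the proof reduces to checking the types one family at a time. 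Throughout I assume $m\ge2$, so that ruled boxes exist.

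For the only-if part, in contrapositive form, I show that $s<m$ or $s>(\delta-1)m$—equivalently $s\bmod n\in\{0,\dots,m-1\}\cup\{n-m+1,\dots,n-1\}$—forces $\Lambda_s$ to preserve the type $(0,1,\delta-1)$, hence to be type-preserving. I take $D=\{v:v_j=0\}$ for a single coordinate $j$ of the first brick; this $D$ has a ruled first brick and all other bricks black, and its image is the hyperplane $\{w:w_{j+s}=0\}$. Since the shift is shorter than one brick in one of the two directions, $j$ can be chosen so that $j+s\bmod n$ again falls in the first brick, and then the image has type $(0,1,\delta-1)$ as well.

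For the if part I prove that $m\le s\le(\delta-1)m$ leaves no nontrivial type preserved. Nonruled types $(n_w,0,\delta-n_w)$ fail for every $s\neq0$: the zero-block of the unique such $D$ occupies bricks $1,\dots,n_w$, whereas in the image it is shifted by $s$, so the white boxes leave their positions. For the ruled types $(n_w,1,\delta-n_w-1)$ I split on $b$. If $b=0$ then $\Lambda_s$ is the nontrivial cyclic brick-permutation by $a$, and a canonical type—white boxes in one initial block, at most one ruled box—is fixed by no nonzero rotation. If $b\neq0$ I argue positionally: for $1\le n_w\le\delta-2$ the coordinates feeding the prospective white bricks $1,\dots,n_w$ of the image form $\{1-s,\dots,n_wm-s\}\bmod n$, which is contained in the only possibly-constant coordinates of $D$ (bricks $1,\dots,n_w+1$) only when $s\in\{0\}\cup\{n-m,\dots,n-1\}$, excluded here, so some intended white box is non-constant; for $n_w=0$ the $m$ coordinates $\{1-s,\dots,m-s\}\bmod n$ feeding the image's first brick all lie in bricks $\ge2$, the free region of $D$, so that brick comes out black rather than ruled; and the remaining type $(\delta-1,1,0)$, having no free region, is the image of $(0,1,\delta-1)$ under bit-reversal together with $s\mapsto n-s$ and is excluded identically.

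The crux is the case $b\neq0$ of the if part: one must follow, across brick boundaries, which input coordinates feed each output brick and check that a shift of a full brick or more necessarily splits the single ``white $\to$ ruled $\to$ black'' transition of a canonical type, so that the shifted set can never realign into a canonical type. Keeping straight the two genuinely different obstructions—a displaced white block versus a ruled brick turned fully free—and correctly isolating the degenerate types $n_w\in\{0,\delta-1\}$, where one of the white or the black region is absent, is where the bookkeeping must be done with care.
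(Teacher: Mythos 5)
Your overall strategy is sound and genuinely different from the paper's: the paper proves this proposition by writing $\Lambda_s=\begin{bmatrix}0&\mathbb{1}_{n-s}\\ \mathbb{1}_s&0\end{bmatrix}$ and checking, case by case in $s$, which of the block-matrix conditions of Theorem~\ref{thm: properties of M} (and of Lemmas~\ref{lem: 01delta} and~\ref{lem: delta10}) can hold, whereas you track the cyclic shift coordinate by coordinate. Your converse witness (a coordinate hyperplane of type $(0,1,\delta-1)$, where the paper instead exhibits a preserved set of type $(\delta-1,1,0)$ via Lemma~\ref{lem: delta10}) is correct, as are your interval computations for the types $(n_w,0,\delta-n_w)$, for $b=0$, for $b\neq 0$ with $1\le n_w\le\delta-2$, and for $n_w=0$; your reading of ``type-preserving'' as ``some nontrivial type is preserved'' is the one the paper actually operates with in Theorem~\ref{thm: properties of M}, and your implicit use of the brick-wise product structure of $D$ (black bricks genuinely free) matches the paper's own implicit assumption.

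There is, however, one genuine gap: the disposal of the type $(\delta-1,1,0)$ by symmetry fails. Bit-reversal does conjugate $\Lambda_s$ into $\Lambda_{n-s}$, but it sends a set of type $(\delta-1,1,0)$, i.e.\ white bricks $1,\dots,\delta-1$ and ruled brick $\delta$, to a set whose type sequence is a ruled brick followed by \emph{white} bricks: reversal does not exchange white with black boxes, since singleton projections stay singletons. So the reversed configuration is not the type $(0,1,\delta-1)$ (ruled-then-black) that your $n_w=0$ argument handles; that argument crucially used that bricks $2,\dots,\delta$ are free, which is false here, so the case is not ``excluded identically.'' Fortunately the case admits a direct two-line repair: for $m\le s\le(\delta-1)m$, the output brick $\delta$, i.e.\ coordinates $n-m+1,\dots,n$, is fed by the input coordinates $n-m+1-s,\dots,n-s$, which all lie in $[1,n-m]$ (because $s\ge m$ gives $n-s\le n-m$, and $s\le n-m$ gives $n-m+1-s\ge 1$), hence entirely in the constant white region of $D$; so brick $\delta$ of the image is white where the type demands a ruled box, and no set of type $(\delta-1,1,0)$ is preserved. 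With this patch your proof is complete.
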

\begin{proof}
We write $\Lambda$ as the block matrix
$$
\Lambda=\begin{bmatrix}
0 & \mathbb{1}_{m\delta-s}\\
\mathbb{1}_{s} & 0
\end{bmatrix} ,
$$
where $\mathbb{1}_t$ is the $t\times t$ identity matrix. We will deal with several cases independently, starting by $s=m$. 
\\
In this case, for each $n_w\in\{1,\ldots,\delta-1\}$ we have $\Lambda_{n_w+1,n_w}=\mathbb{1}_m$, hence equation \eqref{eq: property for nonruled} is never satisfied. Moreover, it follows that also property \eqref{second property} is never satisfied. So, the only possibility left is that $\Lambda$ satisfies both property \eqref{third property 01delta} and property \eqref{fourth property 01delta} of Lemma \ref{lem: 01delta}, so that $\Lambda$ would preserve a certain set $D$ of type $(0,1,\delta-1)$. However, since $\Lambda_{1,2}=\mathbb{1}_m$, it follows that property \eqref{third property 01delta} cannot be satisfied by a set of such type.
\\
Let now $s$ be strictly larger than $m$. Then, property \eqref{second property} is never satisfied, hence we only need to deal with Lemma \ref{lem: 01delta}. Note that we can still apply the same argument as we did above, and therefore prove that property \eqref{third property 01delta} cannot be applied.
\\
These two cases together prove that for any $s\in\{m,\ldots,m(\delta-1)\}$ the rotation of $s$ bits is non-type-preserving. We assume now that $s$ is not inside the interval, and prove that $\Lambda$ is a type-preserving matrix. Trivially, if $s=0$ then $\Lambda$ is the identity matrix, which is a type-preserving matrix. In the other possible cases,  $\Lambda_{(\delta,1):(\delta,\delta-1)}$ is not a full-rank matrix, and $\Lambda_{\delta,\delta}\neq 0$. Then, $\Lambda$ satisfies respectively property \eqref{second property delta10} and property \eqref{third property delta10} of Lemma \ref{lem: delta10}, implying that $\Lambda$ is type-preserving.
\end{proof}

\begin{corollary}
The mixing layer of a $\mathrm{GOST}-\mathrm{like}$ cipher is non-type-preserving.
\end{corollary}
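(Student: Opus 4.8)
The plan is to obtain the corollary as an immediate specialization of the preceding Proposition, so essentially no new work is required beyond matching the hypotheses. First I would recall that, by the definition imported from \cite{GOST_ric}, the mixing layer of a $\mathrm{GOST}$-like cipher is precisely the binary circulant permutation matrix $\Lambda_s$ associated to the rotation $\pi_s$, where the rotation amount $s$ is constrained to lie in the range $\{m,\ldots,(\delta-1)m\}$. That is, the defining interval for the parameter is exactly $m\leq s\leq m(\delta-1)$.

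Next I would invoke the Proposition, which characterizes exactly when such a circulant permutation matrix is non-type-preserving: $\Lambda_s$ is non-type-preserving if and only if $m\leq s\leq m(\delta-1)$. Since the admissible values of $s$ for a $\mathrm{GOST}$-like cipher fall precisely in this interval by construction, the hypothesis of the Proposition is automatically met. Applying it directly yields that $\Lambda_s$, and hence the mixing layer of the cipher, is non-type-preserving.

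The only verification step is to confirm that the interval appearing in the definition of the $\mathrm{GOST}$-like mixing layer coincides with the interval in the Proposition's if-and-only-if statement, which it does verbatim; as a sanity check, the concrete $\mathrm{GOST}$ parameters $m=4$, $\delta=8$, $s=11$ satisfy $4\leq 11\leq 28=m(\delta-1)$. I do not expect any genuine obstacle here: the result is a one-line consequence of the Proposition, and the argument is complete once the parameter range is matched.
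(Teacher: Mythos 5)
Your proof is correct and matches the paper's (implicit) argument exactly: the paper states this corollary without proof precisely because, as you note, the mixing layer of a $\mathrm{GOST}$-like cipher is by definition $\Lambda_s$ with $s\in\{m,\ldots,(\delta-1)m\}$, which is verbatim the range in the preceding Proposition's if-and-only-if characterization. Your sanity check with the concrete $\mathrm{GOST}$ parameters $m=4$, $\delta=8$, $s=11$ is a sound extra verification.
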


\subsubsection*{Mixing layer of PRESENT}
The mixing layer  $\Lambda_{\mathrm{PRESENT}}$ of PRESENT  (see \cite{PRESENT}) is a permutation matrix in $\mathrm{GL_{64}(\F_2)}$ defined by 
$$
\pi(i)=\left\{\begin{array}{ll}
(16(i-1) \mod 63) +1 & \mbox{ if } 1\leq i \leq 63\\
64 & \mbox{ if } i=64.
\end{array}\right.
$$
\begin{lemma}
The mixing layer of $\mathrm{PRESENT}$ is non-type-preserving.
\end{lemma}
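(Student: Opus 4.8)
The plan is to read off the result from Corollary~\ref{lem:anontypepres}, so that the entire statement reduces to producing a single nonzero \emph{backward block}. Concretely, it suffices to exhibit one $n_w\in\{1,\ldots,\delta-2\}$ for which $\Lambda_{(n_w+2,1):(\delta,n_w)}\neq 0$, i.e.\ a bit sitting in one of the bricks $n_w+2,\ldots,\delta$ that $\pi$ moves into one of the bricks $1,\ldots,n_w$. For PRESENT we have $n=64$ and $m=4$, hence $\delta=16$; so I am dealing with $16$ bricks of $4$ bits and $n_w$ ranges over $\{1,\ldots,14\}$.

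The first step is to convert the bit permutation into its action on bricks. Working with $1$-indexed bits, bit $i$ belongs to brick $\lceil i/4\rceil$ and is sent by $\lambda$ to position $\pi(i)=16(i-1)\bmod 63+1$, which belongs to brick $\lceil \pi(i)/4\rceil$. The arithmetic fact that drives the whole argument is $16\cdot 4\equiv 1\pmod{63}$: it gives $16\cdot 4k\equiv k\pmod{63}$ and hence the clean identity $\pi(4k+1)=k+1$ for $0\le k\le 15$. In words, the lowest bit of brick $k+1$ is dragged all the way down to bit $k+1$, which lives in the far lower brick $\lceil(k+1)/4\rceil$.

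With this identity in hand the backward blocks fill in immediately. Fix $n_w\in\{1,\ldots,14\}$ and take $k=n_w+1$: the bit $4(n_w+1)+1$ lies in brick $n_w+2$ and is sent to bit $n_w+2$, lying in brick $\lceil(n_w+2)/4\rceil$, and one checks that $\lceil(n_w+2)/4\rceil\le n_w$ for every $n_w\ge 1$. Thus the corresponding entry of $\Lambda$ falls inside $\Lambda_{(n_w+2,1):(\delta,n_w)}$, which is therefore nonzero. Since this holds for every admissible $n_w$ (in particular for at least one), Corollary~\ref{lem:anontypepres} tells us at once that $\Lambda_{\mathrm{PRESENT}}$ is non-type-preserving.

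I expect no real obstacle here: the proof is a finite, structural verification, and the only delicate point is keeping the bit-to-brick bookkeeping straight and confirming the inequality $\lceil(n_w+2)/4\rceil\le n_w$, so that the crossing genuinely lands in columns $1,\ldots,n_w$. As a sanity check and a more conceptual phrasing, one can instead test the subgroup $\Span{2^{m}}$ directly: it has type $(1,0,\delta-1)$, but $\pi$ scatters its single white brick into the four distinct bricks that receive bits $1,\ldots,4$, turning each of them into a ruled box; the image therefore has $n_r>1$, so its type differs from that of $\Span{2^{m}}$ and Definition~\ref{def:typepres} is violated outright.
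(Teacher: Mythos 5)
Your proof is correct and follows essentially the same route as the paper: both arguments reduce to Corollary~\ref{lem:anontypepres} by exhibiting a nonzero entry of the backward block $(\Lambda_{\mathrm{PRESENT}})_{(n_w+2,1):(16,n_w)}$ for every $n_w\in\{1,\ldots,14\}$ --- the paper uses the three fixed entries $(13,4)$, $(45,12)$, $(61,16)$, which are exactly instances of your identity $\pi(4k+1)=k+1$ with $k=3,11,15$, while you use the uniform witness $k=n_w+1$. One caution: the corollary's ``for any $n_w$'' must be read as ``for \emph{all} $n_w$'' (the operative notion of non-type-preserving behind Theorem~\ref{thm: properties of M} requires failure of the preservation conditions at every $n_w$, which is why the paper, like you, checks the whole range $\{1,\ldots,14\}$), so your opening claim that exhibiting a single $n_w$ suffices --- and likewise the closing sanity check via the single subgroup $\Span{2^m}$ --- is a misreading of the quantifier, harmless here only because your main argument does in fact verify the condition for every admissible $n_w$.
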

\begin{proof}
First, recall that in PRESENT we have 16 bricks of dimension 4. Note that 
\begin{itemize}
\item the bit of value $1$ in position $(13,4)$ is contained in the submatrices \linebreak $(\Lambda_{\mathrm{PRESENT}})_{(3,1):(16,1)}$ and $(\Lambda_{\mathrm{PRESENT}})_{(4,1):(16,2)}$;
\item the bit of value $1$ in position $(45,12)$ is contained in the submatrices \linebreak $(\Lambda_{\mathrm{PRESENT}})_{(n_w+2,1):(16,n_w)}$, for each $n_w\in\{3,\ldots,10\}$;
\item the bit of value $1$ in position $(61,16)$ is contained in the submatrices \linebreak $(\Lambda_{\mathrm{PRESENT}})_{(n_w+2,1):(16,n_w)}$, for each $n_w\in\{11,\ldots,14\}$.
\end{itemize}
So $(\Lambda_{\mathrm{PRESENT}})_{(n_w+2,1):(16,n_w)}\ne 0$, for each $n_w\in\{1,\ldots,14\}$ and hence we can apply Corollary \ref{lem:anontypepres}.
\end{proof}

\subsubsection*{MDS matrix}
\begin{definition}
A matrix over a finite field which has all the minors not equal to zero is called \emph{MDS (Maximum Distance Separable)}.
\end{definition}
\begin{lemma}
An MDS mixing layer $\Lambda$ over $\F_{2^m}$, with $m>1$ the dimension of each S-Box, is non-type-preserving.
\end{lemma}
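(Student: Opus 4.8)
The plan is to reduce the MDS hypothesis to the single structural fact that every entry of $\Lambda$ is nonzero, and then feed this into Corollary \ref{lem:anontypepres}. First I would recall the definition: an MDS matrix has all its minors nonzero, so in particular all its $1\times 1$ minors, i.e. all its scalar entries over $\F_{2^m}$, are nonzero. Since the mixing layer acts $\F_2$-linearly and multiplication by a nonzero element of $\F_{2^m}$ is a bijection of $\F_2^m$, each block $\Lambda_{i,j}$ is an invertible (hence nonzero) $m\times m$ binary matrix. The hypothesis $m>1$ enters only to make the ruled regime $2\le t\le 2^m-1$ nonempty, so that ruled boxes exist at all and the type machinery is non-degenerate.

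With every block nonzero, the bulk of the argument is immediate. For each $n_w\in\{1,\ldots,\delta-2\}$ the block-submatrix $\Lambda_{(n_w+2,1):(\delta,n_w)}$ is nonempty and has all nonzero entries, hence $\Lambda_{(n_w+2,1):(\delta,n_w)}\neq 0$. By Corollary \ref{lem:anontypepres} this already yields that $\Lambda$ is non-type-preserving as soon as $\delta\ge 3$.

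The main obstacle is that Corollary \ref{lem:anontypepres} only rules out the values $n_w\in\{1,\ldots,\delta-2\}$, so the boundary types $(0,1,\delta-1)$ and $(\delta-1,1,0)$ governed by Lemmas \ref{lem: 01delta} and \ref{lem: delta10} (and the small case $\delta=2$) are not covered by it and must be handled separately. The robust tool here is the strong diffusion forced by MDS. Since every entry of each column of $\Lambda$ is nonzero, a single free (black) input box $i_0$ already fills every output box: for each $j$, the term $v_{i_0}\Lambda_{i_0,j}$ ranges over all of $\F_{2^m}$ as $v_{i_0}$ does, so $\mathrm{Im}_{|_D}\lambda$ is black in every box. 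Hence any $D$ possessing at least one black box is sent to the all-black type $(0,0,\delta)$, which differs from $\mathtt{type}(D)$ whenever $D$ also has a white or ruled box; this disposes of every type with $n_b\ge 1$ and $n_w+n_r\ge 1$, including $(0,1,\delta-1)$. For the remaining boundary type $(\delta-1,1,0)$, which has no black box, the same nonzero-column observation shows that the lone ruled box makes every output box vary, so $\mathrm{Im}_{|_D}\lambda$ has no white box at all; equivalently, the block row $\Lambda_{(\delta,1):(\delta,\delta-1)}$ is full-rank (each block being invertible), so property \eqref{second property delta10} of Lemma \ref{lem: delta10} fails and the type cannot be preserved.

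Putting these together, no nontrivial type is preserved by $\Lambda$, so $\Lambda$ is non-type-preserving. I expect the only genuinely delicate point to be the type $(0,1,\delta-1)$: its necessary conditions \eqref{third property 01delta}--\eqref{fourth property 01delta} in Lemma \ref{lem: 01delta} can in fact be satisfied by an MDS matrix, so the clean way to exclude it is the direct diffusion computation above rather than the submatrix criterion.
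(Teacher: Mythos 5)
Your proposal is correct, and its core coincides with the paper's own one-line proof: MDS means all minors, in particular all $1\times 1$ minors, are nonzero, hence every block $\Lambda_{i,j}$ is nonzero (indeed invertible as an $m\times m$ binary matrix, since multiplication by a nonzero element of $\F_{2^m}$ is a bijection), so $\Lambda_{(n_w+2,1):(\delta,n_w)}\neq 0$ for every $n_w\in\{1,\ldots,\delta-2\}$ and Corollary \ref{lem:anontypepres} applies. The paper stops exactly there. Where you genuinely go further is in observing that Corollary \ref{lem:anontypepres} only rules out the types handled by Lemmas \ref{lem: necessary nonruled} and \ref{lem: necessary ruled}, leaving the boundary types $(0,1,\delta-1)$ and $(\delta-1,1,0)$ of Lemmas \ref{lem: 01delta} and \ref{lem: delta10} (and the degenerate case $\delta=2$, where the range $\{1,\ldots,\delta-2\}$ is empty) formally uncovered; you close these gaps with a direct diffusion argument: since every block is invertible, a single black input box makes every output box black, so any $D$ with $n_b\ge 1$ and $n_w+n_r\ge 1$ has its type destroyed, and for $(\delta-1,1,0)$ the block row $\Lambda_{(\delta,1):(\delta,\delta-1)}$ is full-rank, violating property \eqref{second property delta10}. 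This is sound, with the small caveat (shared by the paper's own framework) that the ``one black box blackens everything'' step uses that the black coordinates of $D$ vary freely with the rest fixed, which holds for the subgroup-derived sets $D\gamma$ relevant to the applications. Your closing remark is also well taken: conditions \eqref{third property 01delta}--\eqref{fourth property 01delta} as literally written, with upper bound $2^{m(\delta-1)}$ rather than $2^m$, can indeed be met by an MDS matrix when $\delta\ge 3$, even though the image in the candidate ruled box is the full box $\F_{2^m}$; this flags an apparent typo in Lemma \ref{lem: 01delta} (taken literally, its converse would contradict the present lemma), and your choice to bypass it by direct computation is the right call. In short: same key mechanism as the paper, but your version is strictly more complete, since the paper's citation of Corollary \ref{lem:anontypepres} silently treats it as covering the boundary types as well.
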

\begin{proof}
By definition it follows $\Lambda_{(n_{w}+2,1):(\delta,n_{w})} \neq 0$ for each $n_w \in \{1,\ldots,\delta-2\}$, hence we can apply Corollary \ref{lem:anontypepres}.
\end{proof}
\subsubsection*{Mixing layer of an AES-like cipher}
Let 
$$
\texttt{MixColumns}=
\begin{bmatrix}
M &  \cdots & 0\\
\vdots & \ddots & \vdots\\
0 & \cdots & M
\end{bmatrix}\in\mathrm{GL}_{\delta}(\F_{2^m}),
$$
where $\delta=2^t$, for some even integer $t$, and we write the matrix as a $2^{t/2} \times 2^{t/2}$ block matrix with each block being in $\mathrm{GL}_{2^{t/2}}(\F_{2^m})$; in particular, 0 is the zero matrix in $\mathrm{GL}_{2^{t/2}}(\F_{2^m})$ and $M$ is an MDS matrix in $\mathrm{GL}_{2^{t/2}}(\F_{2^m})$.

\noindent  With the same notation as above, let
$$
\texttt{ShiftRows}=
\begin{bmatrix}
\mathbb{I}_{1}& \mathbb{I}_{2} & \ldots &\mathbb{I}_{2^{t/2}-1} &\mathbb{I}_{2^{t/2}}\\
\mathbb{I}_{2^{t/2}}&\mathbb{I}_{2}&\ldots&\mathbb{I}_{2^{t/2}-2}&\mathbb{I}_{2^{t/2}-1}\\
\vdots&\vdots&\ddots&\vdots&\vdots\\
\mathbb{I}_{2}&\mathbb{I}_{3}&\cdots&\mathbb{I}_{2^{t/2}}&\mathbb{I}_{1}
\end{bmatrix}\in\mathrm{GL}_{\delta}(\F_{2^m})
$$
be a circulant block matrix, where $\mathbb{I}_{j}$ is the matrix in $\mathrm{GL}_{2^{t/2}}(\F_{2^m})$ with the identity element of $\F_{2^m}$ in position $(j,j)$ and the zero element of $\F_{2^m}$ everywhere else.

\noindent Let us define $\Lambda$ as the following block matrix in $\mathrm{GL}_{\delta}(\F_{2^m})$
$$
\Lambda=\texttt{ShiftRows}^{\mathrm{T}}\cdot\texttt{MixColumns}^{\mathrm{T}}=
\begin{bmatrix}
\mathbb{I}_{1}\cdot M^{\mathrm{T}}& \mathbb{I}_{2^{t/2}}\cdot M^{\mathrm{T}} & \ldots &\mathbb{I}_{3}\cdot M^{\mathrm{T}} &\mathbb{I}_{2}\cdot M^{\mathrm{T}}\\
\mathbb{I}_{2}\cdot M^{\mathrm{T}}&\mathbb{I}_{1}\cdot M^{\mathrm{T}}&\ldots&\mathbb{I}_{4}\cdot M^{\mathrm{T}}&\mathbb{I}_{3}\cdot M^{\mathrm{T}}\\
\vdots&\vdots&\ddots&\vdots&\vdots\\
\mathbb{I}_{2^{t/2}}\cdot M^{\mathrm{T}}&\mathbb{I}_{2^{t/2}-1}\cdot M^{\mathrm{T}}&\cdots&\mathbb{I}_{2}\cdot M^{\mathrm{T}}&\mathbb{I}_{1}\cdot M^{\mathrm{T}}
\end{bmatrix}.
$$
\begin{example}
In the case of $\mathrm{AES}$ we have 16 bricks of dimension $8$, that is, $\delta=16$ and $m=8$. 
Let 
$$
\emph{\texttt{MixColumns}}_{\mathrm{AES}}=
\begin{bmatrix}
M & 0 & 0 & 0\\
0 & M & 0 & 0\\
0 & 0 & M & 0\\
0 & 0 & 0 & M
\end{bmatrix}\in\mathrm{GL}_{16}(\F_{2^8}),
$$
where we write it as a $4\times4$ block matrix with each block being in $\mathrm{GL}_{4}(\F_{2^8})$; in particular, 0 is the zero matrix in $\mathrm{GL}_{4}(\F_{2^8})$ and
$$
M=
\begin{bmatrix}
\hex{2} & \hex{3} & \hex{1} & \hex{1}\\
\hex{1} & \hex{2} & \hex{3} & \hex{1}\\
\hex{1} & \hex{1} & \hex{2} & \hex{3}\\
\hex{3} & \hex{1} & \hex{1} & \hex{2}
\end{bmatrix}\in\mathrm{GL}_{4}(\F_{2^8}) 
$$
using the hexadecimal notation.

\noindent  With the same notation above, let
$$
\emph{\texttt{ShiftRows}}_{\mathrm{AES}}=
\begin{bmatrix}
\mathbb{I}_{1}& \mathbb{I}_{2} &\mathbb{I}_{3} &\mathbb{I}_{4}\\
\mathbb{I}_{4}&\mathbb{I}_{1}&\mathbb{I}_{2}&\mathbb{I}_{3}\\
\mathbb{I}_{3}&\mathbb{I}_{4}&\mathbb{I}_{1}&\mathbb{I}_{2}\\
\mathbb{I}_{2}&\mathbb{I}_{3}&\mathbb{I}_{4}&\mathbb{I}_{1}
\end{bmatrix}\in\mathrm{GL}_{16}(\F_{2^8}),
$$
where $\mathbb{I}_{j}$ is the matrix in $\mathrm{GL}_{4}(\F_{2^8})$ with $\hex{1}$ in position $(j,j)$ and $\hex{0}$ everywhere else.

\noindent The mixing layer of $\mathrm{AES}$ is the following matrix in $\mathrm{GL}_{16}(\F_{2^8})$
$$
\Lambda_{\mathrm{AES}}=\emph{\texttt{MixColumns}}_{\mathrm{AES}}\cdot\emph{\texttt{ShiftRows}}_{\mathrm{AES}}=
\begin{bmatrix}
M\cdot\mathbb{I}_{1}& M\cdot\mathbb{I}_{2} &M\cdot\mathbb{I}_{3} &M\cdot\mathbb{I}_{4}\\
M\cdot\mathbb{I}_{4}&M\cdot\mathbb{I}_{1}&M\cdot\mathbb{I}_{2}&M\cdot\mathbb{I}_{3}\\
M\cdot\mathbb{I}_{3}&M\cdot\mathbb{I}_{4}&M\cdot\mathbb{I}_{1}&M\cdot\mathbb{I}_{2}\\
M\cdot\mathbb{I}_{2}&M\cdot\mathbb{I}_{3}&M\cdot\mathbb{I}_{4}&M\cdot\mathbb{I}_{1}
\end{bmatrix}.
$$
\end{example}

\begin{proposition}\label{prop:prev}
$\Lambda$ is non-type-preserving.
\end{proposition}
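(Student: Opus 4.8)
The plan is to reduce everything to Corollary~\ref{lem:anontypepres}: it suffices to exhibit a single index $n_w\in\{1,\ldots,\delta-2\}$ for which $\Lambda_{(n_w+2,1):(\delta,n_w)}\neq 0$, i.e. a nonzero entry of $\Lambda$ lying at least two brick-rows below the diagonal. Everything then follows at once, without verifying the full list of conditions in Theorem~\ref{thm: properties of M}.

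To locate such an entry I will unwind the two-level block structure of $\Lambda=\texttt{ShiftRows}^{\mathrm{T}}\cdot\texttt{MixColumns}^{\mathrm{T}}$. Since $\texttt{MixColumns}^{\mathrm{T}}$ is block-diagonal with $M^{\mathrm{T}}$ on the diagonal, each $(a,b)$ outer block of $\Lambda$ is of the form $\mathbb{I}_{c}\,M^{\mathrm{T}}$, where $c=c(a,b)$ is the cyclically shifted index read off from $\texttt{ShiftRows}$. The key elementary observation is that $\mathbb{I}_{c}\,M^{\mathrm{T}}$ has exactly one nonzero inner row, namely its $c$-th row, which equals the $c$-th column of $M$; because $M$ is MDS, every entry of $M$ is nonzero, so this entire row is nonzero.

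I will then read off the bottom-left outer block, indexed $(2^{t/2},1)$, which equals $\mathbb{I}_{2^{t/2}}\,M^{\mathrm{T}}$. Its unique nonzero inner row is the last one, and in global coordinates this row occupies row $\delta=2^{t}$ and columns $1,\ldots,2^{t/2}$. Hence $\Lambda_{\delta,1}\neq 0$. Taking $n_w=1$ --- which is admissible since $\delta=2^{t}\geq 4$ forces $n_w\le\delta-2$ --- the submatrix $\Lambda_{(3,1):(\delta,1)}$ contains the nonzero entry $\Lambda_{\delta,1}$, so $\Lambda_{(n_w+2,1):(\delta,n_w)}\neq 0$ and Corollary~\ref{lem:anontypepres} applies.

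The only delicate point is the bookkeeping: one must keep the outer block indices $(a,b)$ separate from the inner $2^{t/2}\times 2^{t/2}$ indices and translate carefully into global row/column positions, so as to be certain the witnessing nonzero entry really sits at least two rows below the diagonal. This is also where the MDS hypothesis enters crucially --- without it the relevant row of $\mathbb{I}_{2^{t/2}}M^{\mathrm{T}}$ could contain zeros, and the clean appeal to Corollary~\ref{lem:anontypepres} would no longer be available.
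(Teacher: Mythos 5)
Your computation is exactly the paper's: you unwind the block structure to show that the bottom-left outer block is $\mathbb{I}_{2^{t/2}}\cdot M^{\mathrm{T}}$, whose last inner row is the last column of $M$ and hence entrywise nonzero by the MDS property, giving $\Lambda_{\delta,1}\neq 0$; the paper states this in one line and then invokes Corollary~\ref{lem:anontypepres}, as you do. The one genuine flaw is your opening reduction, ``it suffices to exhibit a \emph{single} index $n_w$.'' This misreads the quantifier in Corollary~\ref{lem:anontypepres}: there ``for any $n_w\in\{1,\ldots,\delta-2\}$'' must be read as \emph{for every} such $n_w$. Indeed, Theorem~\ref{thm: properties of M} characterizes type-preservation by the \emph{existence} of some $n_w$ satisfying either equation~\eqref{eq: property for nonruled} or properties (a)--(d), so to conclude that $\Lambda$ is non-type-preserving one must defeat every candidate $n_w$, not just one. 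The single-index principle is false in general: for $\delta=4$, the invertible block matrix with identity blocks on the diagonal, a single nonzero block in position $(3,1)$ and zero blocks elsewhere satisfies $\Lambda_{(3,1):(4,1)}\neq 0$ (your condition at $n_w=1$), yet it is type-preserving, since $\Lambda_{(4,1):(4,3)}=0$, i.e.\ equation~\eqref{eq: property for nonruled} holds at $n_w=3$.

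Fortunately, your witness repairs the argument automatically: the entry in position $(\delta,1)$ sits in the last brick-row and first brick-column, so it belongs to $\Lambda_{(n_w+2,1):(\delta,n_w)}$ for \emph{every} $n_w\in\{1,\ldots,\delta-2\}$ (one needs only $n_w+2\leq\delta$ and $1\leq n_w$). Hence the universal hypothesis of Corollary~\ref{lem:anontypepres} holds in full, which is precisely how the paper phrases its proof (``for each $n_w\in\{1,\ldots,\delta-2\}$''). Replace your ``taking $n_w=1$'' step by this observation and your proof is complete and coincides with the paper's.
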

\begin{proof}
Since $M$ is an MDS
matrix, $\Lambda_{\delta,1} \neq 0$. Therefore, $\Lambda_{(n_{w}+2,1):(16,n_{w})}\ne 0$, for each $n_{w}\in\Set{1,\ldots,\delta-2}$, so we can apply Corollary \ref{lem:anontypepres}.
\end{proof}

\begin{corollary}
The mixing layer of $\mathrm{AES}$ is non-type-preserving.
\end{corollary}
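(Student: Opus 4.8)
The plan is to treat the statement as a specialization of Proposition~\ref{prop:prev}. AES is recovered from the AES-like family by fixing $\delta=16$, $m=8$ and $t=4$ (so $2^{t/2}=4$) and taking $M$ to be the AES \texttt{MixColumns} matrix, the classical circulant on the bytes $\hex{2},\hex{3},\hex{1},\hex{1}$. Since this $M$ is the standard AES matrix, it is MDS — every square submatrix is invertible over $\F_{2^8}$, and in particular every entry of $M$ is a nonzero byte — which is the only property of $M$ used in the proof of Proposition~\ref{prop:prev}. My first step is therefore to record this identification of parameters and the MDS property of $M$.

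The second step is to derive the hypothesis of Corollary~\ref{lem:anontypepres}, namely that $(\Lambda_{\mathrm{AES}})_{(n_w+2,1):(16,n_w)}\neq 0$ for every $n_w\in\Set{1,\ldots,14}$. Here I expect the main obstacle, and it is one of careful bookkeeping rather than of mathematical depth: the type-preservation machinery regards $\Lambda_{\mathrm{AES}}$ as a $16\times16$ brick matrix over $\F_{2^8}$, whereas the example displays it as a $4\times4$ array of $4\times4$ blocks $M\,\mathbb{I}_j$, and — because of the row-vector convention $v\mapsto v\Lambda$ underlying Definition~\ref{def:typepres} — the literal bottom-left entry $(\Lambda_{\mathrm{AES}})_{16,1}$ actually vanishes. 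Thus the bare ``corner is nonzero'' shortcut used for the abstract $\Lambda$ cannot be quoted verbatim, and the indices must be translated with care.

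To clear this obstacle cleanly I would exhibit two explicit nonzero entries low in the first block-columns and show that together they cover all values of $n_w$. The entry in global position $(16,2)$ lies in the block $M\,\mathbb{I}_2$ and equals the nonzero byte $\hex{1}$ of $M$; since it sits in column $2$ and row $16$, it forces $(\Lambda_{\mathrm{AES}})_{(n_w+2,1):(16,n_w)}\neq 0$ for all $n_w\in\Set{2,\ldots,14}$. The remaining case $n_w=1$ is handled by the entry in global position $(4,1)$, which lies in the block $M\,\mathbb{I}_1$ and equals the nonzero byte $\hex{3}$. Having verified the hypothesis for every $n_w\in\Set{1,\ldots,14}$, I would invoke Corollary~\ref{lem:anontypepres} to conclude that $\Lambda_{\mathrm{AES}}$ is non-type-preserving. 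Equivalently, one may observe that under the paper's convention $v\mapsto v\Lambda$ the AES mixing layer is $\texttt{ShiftRows}^{\mathrm{T}}\cdot\texttt{MixColumns}^{\mathrm{T}}$, i.e.\ precisely the matrix $\Lambda$ of Proposition~\ref{prop:prev}, which makes the corollary immediate; the direct computation above is the convention-free confirmation.
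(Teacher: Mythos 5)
Your conclusion is correct, and your closing observation --- that under the paper's right-action convention the AES mixing layer is $\texttt{ShiftRows}^{\mathrm{T}}\cdot\texttt{MixColumns}^{\mathrm{T}}$, i.e.\ exactly the matrix $\Lambda$ of Proposition~\ref{prop:prev} --- is precisely the paper's own argument, which it then confirms by exhibiting a single nonzero entry and invoking Corollary~\ref{lem:anontypepres}. However, the part you present as the main careful computation inverts the convention. You assert that, because of the row-vector convention $v\mapsto v\Lambda$, the relevant matrix is the displayed $\Lambda_{\mathrm{AES}}=\texttt{MixColumns}_{\mathrm{AES}}\cdot\texttt{ShiftRows}_{\mathrm{AES}}$, whose $(16,1)$ entry vanishes, so that the ``corner is nonzero'' shortcut cannot be quoted verbatim. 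The opposite is true: since \cite{AES} defines the mixing layer via the left action $x\mapsto \Lambda_{\mathrm{AES}}\,x$, the row-vector convention forces one to pass to $(\Lambda_{\mathrm{AES}})^{\mathrm{T}}$, whose bottom-left block is $\mathbb{I}_{4}\cdot M^{\mathrm{T}}$ and whose $(16,1)$ entry is $\hex{1}\neq\hex{0}$; the corner shortcut therefore applies verbatim, and this one-entry check is the entirety of the paper's computation.

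Your two entries $(16,2)$ and $(4,1)$ are correctly located and correctly evaluated in $\Lambda_{\mathrm{AES}}$, and they do certify the hypothesis of Corollary~\ref{lem:anontypepres} --- but for $\Lambda_{\mathrm{AES}}$ itself, i.e.\ for the map $v\mapsto v\Lambda_{\mathrm{AES}}$, which is the \emph{transpose} of AES's mixing layer under the paper's convention. Calling this ``the convention-free confirmation'' overclaims: the conditions in Theorem~\ref{thm: properties of M} and Corollary~\ref{lem:anontypepres} concern lower-left submatrices and are not transpose-invariant, so non-type-preservation of a matrix does not formally transfer to its transpose (here both matrices happen to satisfy the hypothesis, but that requires the separate check that the paper performs on $(\Lambda_{\mathrm{AES}})^{\mathrm{T}}$). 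As written, your proof stands only because of the final ``equivalently'' sentence; the featured computation should either be redone on $(\Lambda_{\mathrm{AES}})^{\mathrm{T}}$ --- where it collapses to the single entry $(16,1)=\hex{1}$ --- or be presented as a fact about the transposed map rather than as the verification of the corollary's hypothesis.
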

\begin{proof}
The result directly follows from Proposition \ref{prop:prev}, anyway we make explicit the algebraic computations in the case of the AES cipher. In \cite{AES} the authors define the mixing layer of $\mathrm{AES}$ using the left matrix action. Since in this paper we are using the right action, we have to consider the transpose of $\Lambda_{\mathrm{AES}}$
$$
(\Lambda_{\mathrm{AES}})^{\mathrm{T}}=
\begin{bmatrix}
\mathbb{I}_{1}\cdot M^{\mathrm{T}}& \mathbb{I}_{4}\cdot M^{\mathrm{T}} &\mathbb{I}_{3}\cdot M^{\mathrm{T}} &\mathbb{I}_{2}\cdot M^{\mathrm{T}}\\
\mathbb{I}_{2}\cdot M^{\mathrm{T}}&\mathbb{I}_{1}\cdot M^{\mathrm{T}}&\mathbb{I}_{4}\cdot M^{\mathrm{T}}&\mathbb{I}_{3}\cdot M^{\mathrm{T}}\\
\mathbb{I}_{3}\cdot M^{\mathrm{T}}&\mathbb{I}_{2}\cdot M^{\mathrm{T}}&\mathbb{I}_{1}\cdot M^{\mathrm{T}}&\mathbb{I}_{4}\cdot M^{\mathrm{T}}\\
\mathbb{I}_{4}\cdot M^{\mathrm{T}}&\mathbb{I}_{3}\cdot M^{\mathrm{T}}&\mathbb{I}_{2}\cdot M^{\mathrm{T}}&\mathbb{I}_{1}\cdot M^{\mathrm{T}}
\end{bmatrix}\in\mathrm{GL}_{16}(\F_{2^8}).
$$
Finally, we note that the coefficient $(16,1)$ of $(\Lambda_{\mathrm{AES}})^{\mathrm{T}}$ is the coefficient $(4,1)$ of 
$$
\mathbb{I}_{4}\cdot M^{\mathrm{T}}=
\begin{bmatrix}
\hex{0} & \hex{0} & \hex{0} & \hex{0}\\
\hex{0} & \hex{0} & \hex{0} & \hex{0}\\
\hex{0} & \hex{0} & \hex{0} & \hex{0}\\
\hex{1} & \hex{1} & \hex{3} & \hex{2}
\end{bmatrix}
$$
that is, $\hex{1}\ne\hex{0}$. Hence $(\Lambda_{\mathrm{AES}})^{\mathrm{T}}_{(n_{w}+2,1):(16,n_{w})}\ne 0$ for each $n_{w}\in\Set{1,\ldots,14}$, so we can apply Corollary \ref{lem:anontypepres}.
\end{proof}

\subsubsection*{Mixing layer of GPig2}
The GPig2 mixing layer  (see \cite{lin-att}) is non-type-preserving. Indeed, it corresponds to the matrix
$$
\setlength{\arraycolsep}{1pt}
   \renewcommand{\arraystretch}{1}
 \Lambda_{\mathrm{GPig2}}=  \tiny{
\left [ 
\begin{array}{cccc|cccc|cccc|cccc}
1&0&0&0& 0&0&0&0& 0&0&0&0& 0&0&0&0\\
0&0&0&0& 1&0&0&0& 0&0&0&0& 0&0&0&0\\
0&0&0&0& 0&0&0&0& 1&0&0&0& 0&0&0&0\\
0&0&0&0& 0&0&0&0& 0&0&0&0& 1&0&0&0\\            
\hline
0&1&0&0& 0&0&0&0& 0&0&0&0& 0&0&0&0\\    
0&0&0&0& 0&1&0&0& 0&0&0&0& 0&0&0&0\\  
0&0&0&0& 0&0&0&0& 0&1&0&0& 0&0&0&0\\  
0&0&0&0& 0&0&0&0& 0&0&0&0& 0&1&0&0\\
\hline                  
0&0&1&0& 0&0&0&0& 0&0&0&0& 0&0&0&0\\   
0&0&0&0& 0&0&1&0& 0&0&0&0& 0&0&0&0\\
0&0&0&0& 0&0&0&0& 0&0&1&0& 0&0&0&0\\
0&0&0&0& 0&0&0&0& 0&0&0&0& 0&0&1&0\\ 
\hline              
0&0&0&1& 0&0&0&0& 0&0&0&0& 0&0&0&0\\
0&0&0&0& 0&0&0&1& 0&0&0&0& 0&0&0&0\\
0&0&0&0& 0&0&0&0& 0&0&0&1& 0&0&0&0\\
0&0&0&0& 0&0&0&0& 0&0&0&0& 0&0&0&1                
\end{array}
\right]},
$$
\noindent  where   $(\Lambda_{\mathrm{GPig2}})_{(3,1):(4,1)}\ne 0$  and $(\Lambda_{\mathrm{GPig2}})_{(4,1):(4,2)}\ne 0$, so we can apply Corollary \ref{lem:anontypepres}.

\section{Applications}\label{sec:app}
We consider an SPNmod cipher with non-type-preserving mixing layer and we prove, under some assumptions, that the group  generated by its round functions is primitive. Similarly, we generalize a GOST-like cipher using a non-type-preserving mixing layer, and thus we obtain the same result under the only hypothesis on the invertibility of the S-Boxes.

\subsection{Primitivity of an SPNmod cipher}\label{sec:security}
In this section we prove that an  SPNmod cipher with invertible S-Boxes and non-type-preserving mixing layer is primitive.
\vspace{2mm}

\noindent Let $$
V= V_1\times V_2\times \ldots\times V_\delta$$ and,  for $1\leq j\leq \delta$, $\dim(V_j) = m$. 
\begin{theorem}\label{secSPNmod}
Let $\mathcal{C}$ be an SPNmod cipher acting on the plaintext space $V$,
  in which a round function has the form
  \begin{equation*}
   \veps_{k}= \gamma\lambda\sigma_k,
  \end{equation*}
  for the round key $k \in V$, where
\begin{itemize}
\item $\gamma\in\sym(V)$ is a non-linear permutation which acts in parallel way on each $V_{j}$, i.e. $\gamma$ is the parallel S-Box
\[(x_1,x_2,\ldots,x_n)\gamma = \left((x_1,\ldots,x_{m})\gamma_1,\ldots,(x_{m(\delta-1)+1},\ldots,x_{n})\gamma_\delta\right),\]
where $\gamma_j\in\sym(V_j)$ and $0\gamma_j\ne 0$.
 \item $\lambda \in \sym(V)$ is a non-type-preserving mixing layer.
 \item  $\sigma_{k}\in \sym(V)$ is the $\boxplus$-translation of $V$ by $k$, i.e. $v\sigma_k=v\boxplus k$, for any $v\in V$.
 \end{itemize}
  \noindent Then $\Gamma_{\infty}=\Gamma_{\infty}(\mathcal{C})$ is primitive.
  
\end{theorem}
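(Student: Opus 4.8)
The plan is to first dispose of transitivity and then rule out every block system by showing that the existence of one would force $\lambda$ to be type-preserving, contradicting the hypothesis. By Lemma~\ref{lem:spanTrho} we already have $\Gamma_{\infty}=\Span{\mathcal{T},\rho}$ with $\rho=\gamma\lambda$, and $\Gamma_{\infty}$ is transitive on $V$ because $\mathcal{T}$ is. So it suffices to prove that $\Gamma_{\infty}$ admits no block system. Arguing by contradiction, suppose it does. By Lemma~\ref{rem1} the block system is the set of cosets $\Set{W\boxplus v : v\in V}$ of a non-trivial proper subgroup $W$ of $(\Z_{2^{n}},\boxplus)$, and by Lemma~\ref{lemma:subgroups} we may write $W=\Span{2^{q}}$ with $0<q<n$. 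In particular $W$ has a well-defined type $(n_w,n_r,n_b)$ with $n_r\in\Set{0,1}$ as in Remark~\ref{rem:typesofsubgroups}: its white boxes are single points, its black boxes are full copies of $\F_2^{m}$, and, when $n_r=1$, its ruled box projects onto a subset of $V_{n_w+1}$ of size strictly between $1$ and $2^{m}$.

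The crux is the following reduction. Since $\rho\in\Gamma_{\infty}$ permutes the blocks and $0\in W$, the image $W\rho$ is the block containing $0\rho$, that is $W\rho=W\boxplus 0\rho$; by Lemma~\ref{lemma:Wform} this coset has the same type as $W$, so $\mathtt{type}(W\rho)=\mathtt{type}(W)$. On the other hand $\rho=\gamma\lambda$, so I would factor $W\rho=(W\gamma)\lambda$ and analyse the two stages separately. Because $\gamma$ acts in parallel through the bijections $\gamma_j$ of each brick $V_j$, the projection of $W\gamma$ onto $V_j$ is the $\gamma_j$-image of the projection of $W$, hence has the same cardinality; therefore $\gamma$ preserves types and $D:=W\gamma$ again has type $(n_w,n_r,n_b)$, still with single-point white boxes, full black boxes, and a genuinely ruled box when $n_r=1$ (this is precisely where the invertibility of each $\gamma_j$ is used). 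Combining the two observations yields
\[
\mathtt{type}\!\left(\mathrm{Im}_{|_D}\lambda\right)=\mathtt{type}\!\left((W\gamma)\lambda\right)=\mathtt{type}(W\rho)=\mathtt{type}(W)=\mathtt{type}(D),
\]
so $\lambda$ preserves the type of the concrete set $D=W\gamma$.

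It then remains to feed this into the characterization of Theorem~\ref{thm: properties of M}. Since $D$ has exactly the subgroup-like shape required by the necessary-condition lemmas, I would apply Lemma~\ref{lem: necessary nonruled} when $n_r=0$, obtaining $\Lambda_{(n_w+1,1):(\delta,n_w)}=0$, i.e.\ equation~\eqref{eq: property for nonruled} for this $n_w$; and Lemma~\ref{lem: necessary ruled}, together with its boundary cases Lemmas~\ref{lem: 01delta} and~\ref{lem: delta10}, when $n_r=1$, obtaining properties (a)--(d) for this $n_w$. Here the admissible ranges match: $n_w\in\Set{1,\dots,\delta-1}$ in the non-ruled case and $n_w\in\Set{0,\dots,\delta-1}$ in the ruled case, all covered by the stated lemmas. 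In either case there exists an $n_w$ meeting the hypotheses of Theorem~\ref{thm: properties of M}, so $\lambda$ would be type-preserving, contradicting the assumption. This contradiction shows that no block system exists, and together with transitivity it proves that $\Gamma_{\infty}$ is primitive.

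I expect the main obstacle to be the middle step: cleanly verifying that $\gamma$ preserves types and, crucially, that $W\gamma$ retains the structural features (fixed white part, full black part, honestly ruled box) on which the necessary-condition lemmas rely, rather than degenerating into a different type. The parallel and invertible nature of the S-Box is exactly what guarantees this, and it is essentially the only place where the S-Box hypotheses enter the argument; everything else is bookkeeping on the types of subgroups of $(\Z_{2^{n}},\boxplus)$ and a direct invocation of the earlier classification.
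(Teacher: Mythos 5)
Your proposal is correct and takes essentially the same route as the paper's own proof: transitivity via Lemma~\ref{lem:spanTrho}, reduction of a putative block system (Lemma~\ref{rem1}) to the equation $D\rho = 0\rho\boxplus D$ for a non-trivial proper subgroup $D$ of $(\Z_{2^n},\boxplus)$, type-invariance under the parallel bijective S-Box and under $\boxplus$-translation (Lemma~\ref{lemma:Wform}), and a contradiction with the characterization of Theorem~\ref{thm: properties of M}. Your only departure is cosmetic: you unpack the appeal to Theorem~\ref{thm: properties of M} into its constituent necessary-condition lemmas (Lemmas~\ref{lem: necessary nonruled}, \ref{lem: necessary ruled}, \ref{lem: 01delta}, \ref{lem: delta10}) and verify the admissible ranges of $n_w$ explicitly, details the paper leaves implicit.
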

\begin{proof}
Recall that $\rho\deq \gamma\lambda$ and that by Lemma \ref{lem:spanTrho} we have $\Gamma_{\infty}=
  \Span{
    \mathcal{T}, \rho
    }$ .  
In order to prove  that $\Gamma_{\infty}$ is primitive,  according to Lemma~\ref{rem1}, 
 we have to show that there are no non-trivial proper subgroup $D$ of $(V, \boxplus)$ and $v\in V$ such that 
$$
D \rho  = v \boxplus D .
$$

Since $0\in D$,  we can take $v=0\rho$, hence it is enough to prove that  if  $D\ne\Set{ 0 }$ is a proper subgroup of $\Z_{2^n}$, then $D\rho\ne 0\rho\boxplus  D$. Clearly, an invertible parallel S-Box  maps  any set having a type to another set having the same type, since each S-box is
a bijection. Hence $D$ and $D\gamma$ share the same type and, by Lemma~\ref{lemma:Wform}, this is the same type as $0\rho\boxplus D$. Therefore $0\rho\boxplus D$ cannot be equal to $D\rho$  if we prove that, for any  non-trivial proper subgroup $D$ of $\Z_{2^n}$, $D\gamma$ and $D\rho=(D\gamma)\lambda$ have different types. Finally, the latter statement follows from Theorem \ref{thm: properties of M}, since by hypothesis $\lambda$ is non-type-preserving. 
\end{proof} 

\begin{remark}
The cipher GPig2 \cite{lin-att} is an example of SPNmod cipher satisfying the hypothesis  of Theorem~\ref{secSPNmod} and so the group generated by its round functions is primitive.
\end{remark}

\subsection{Generalization of the mixing layer of a GOST-like cipher and primitivity}
\label{sec: gost-like}
In this section, we use a known  structure of a block cipher to give an example of a  cipher that is primitive if a non-type-preserving mixing layer is used. In particular, we consider a GOST-like cipher, defined in \cite{GOST_ric}, with a generalized mixing layer using any non-type-preserving matrix instead of a rotation. Then we prove that the group generated by the round functions is primitive if the S-Boxes are invertible. 

\vspace{3mm}

We give the definition of a \emph{generalized GOST-like cipher} and of the corresponding group generated by the round functions, arranging the definition of a GOST-like cipher   given in \cite{GOST_ric} by substituting the rotation by $m\leq s \leq m(\delta-1)$ with any non-type-preserving mixing layer.

The  plaintext
space is $V = V^{1} \times V^{2}$, where $V^{1}, V^{2}$ are two copies
of $\F_2^n$, and the  key   space   $\K$ is another copy of $\F_2^n$.
Clearly $V$ inherits both group structures componentwise from $V^{1}, V^{2}$.\\
\noindent Let us consider 
 \begin{itemize}
 \item $V^{i}$, for $i = 1, 2$, as the Cartesian product
 \begin{equation}\label{eq:directsum}
   V^{i}
   =
   V^{i}_{1} \times \cdots \times V^{i}_{\delta} 
 \end{equation}
  of $\delta  > 1$  spaces $V^{i}_j$,  all of the  same dimension  $m >
 1$;
\item  a non-linear map (parallel S-Box) $\gamma\in\mathrm{Sym}(V^{i})$ which acts in parallel way on each $V^{i}_{j}$,
where $\gamma_j\in\sym(V^{i}_{j})$ and $0\gamma_j\ne 0$;
 \item   a non-type-preserving linear map $\lambda \in \sym(V^{i})$;
 \item $\rho \deq \gamma \lambda \in  \Sym(V^{i})$.
 \end{itemize}

\noindent For $(k_{1}, k_{2}) \in V = V^{1} \times V^{2}$, consider the 
$\boxplus$-translation on $V$ by $(k_{1}, k_{2})$
\begin{equation*}
\begin{array}{rccc}
  \sigma_{(k_{1}, k_{2})} 
  :
  &V^{1} \times V^{2}
  &\longrightarrow &V^{1} \times V^{2}\\
  & (x_{1} , x_{2})
  &\longmapsto
  & (x_{1} \boxplus k_{1}, x_{2} \boxplus k_{2}).
\end{array}
\end{equation*}

We now introduce a formal $2n \times 2n$ matrix, which implements the
Feistel structure,
\begin{equation}\label{eq:Sigma}
  \mathcal{P} = \begin{bmatrix}0 & 1\\ 1 & \rho\end{bmatrix},
\end{equation}
where $0$ and $1$ are $n \times n$ matrices.
This acts (on the right) on $(x_{1}, x_{2}) \in V = V^{1} \times V^{2}$ by
\begin{equation}\label{eq:Sigma-acts}
  (x_{1}, x_{2}) \mathcal{P}
  =
  (x_{2}, x_{1} \oplus x_{2} \rho).
\end{equation}
We are ready to define a round function of a \emph{generalized GOST-like cipher}. Let $\Hc = \K \times \K = V$
be the key space, a round takes the form
\begin{equation}\label{eq:round-like}
  \sigma_{k} \, \mathcal{P} \, \sigma_{h},
\end{equation}
with $k, h \in \Hc$. 

\noindent The corresponding group generated by the round functions will thus be
\begin{equation*}
  \Gamma_{\infty}
  =
  \Span{ 
    \sigma_{k} \, \mathcal{P} \, \sigma_{h}
    :
    k, h \in \Hc
  }.
\end{equation*}
\begin{theorem}\label{secGOSTlike}
Let $\mathcal{C}$ be a generalized GOST-like cipher as defined above. If the parallel S-Box $\gamma$ is a permutation of $V^{i}$, in other words $\gamma\in\sym(V^{i})$, then $\Gamma_{\infty}(\mathcal{C})$ is primitive.
\end{theorem}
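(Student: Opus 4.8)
The plan is to adapt the strategy of Theorem~\ref{secSPNmod}, reducing the imprimitivity of the Feistel group to a type obstruction coming from the non-type-preserving mixing layer, but now on the two-component space $V=V^{1}\times V^{2}\cong(\Z_{2^{n}}\times\Z_{2^{n}},\boxplus)$. First I would show that $\Gamma_{\infty}=\Span{\mathcal{T},\mathcal{P}}$ and in particular that it is transitive. Taking $k=h=0$ gives $\mathcal{P}=\sigma_{0}\mathcal{P}\sigma_{0}\in\Gamma_{\infty}$, and then for every $k\in\Hc$ one has $\sigma_{k}=(\sigma_{k}\mathcal{P}\sigma_{0})(\sigma_{0}\mathcal{P}\sigma_{0})^{-1}\in\Gamma_{\infty}$; hence $\mathcal{T}\le\Gamma_{\infty}$ acts regularly. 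By the same argument as in Lemma~\ref{rem1} (which only uses that $\mathcal{T}$ is a regular abelian subgroup), any block system consists of the $\boxplus$-cosets of a non-trivial proper subgroup $W\le(\Z_{2^{n}}\times\Z_{2^{n}},\boxplus)$. Since $\mathcal{T}$-invariance is automatic, and since $0\mathcal{P}=(0,0\rho)$ with $0\in W$, the whole problem reduces to the single relation
\begin{equation*}
  W\mathcal{P}=W\boxplus(0,0\rho):
\end{equation*}
no non-trivial proper $\boxplus$-subgroup $W$ may satisfy it.

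Next I would analyse the structure of $W$. Comparing first components in the relation gives $\pi_{2}(W)=\pi_{1}(W)=:A$, and a Goursat-type argument combined with the fact that the subgroups of the cyclic $2$-group $A$ are linearly ordered (Lemma~\ref{lemma:subgroups}) forces $W\cap(V^{1}\times\{0\})$ and $W\cap(\{0\}\times V^{2})$ to project to one and the same subgroup $W_{0}\le A$. I would then split on $W_{0}$. When $W_{0}$ is non-trivial and proper, evaluating the relation on the elements $(0,u)$ with $u\in W_{0}$ and using that the fibre of $W$ over $u\in W_{0}$ is exactly $W_{0}$ yields $W_{0}\rho=0\rho\boxplus W_{0}$. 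Since $\gamma$ is a brick-wise bijection, $W_{0}\gamma$ and $W_{0}$ share a type, and by Lemma~\ref{lemma:Wform} so does $0\rho\boxplus W_{0}$; hence $\mathtt{type}\!\left((W_{0}\gamma)\lambda\right)=\mathtt{type}(W_{0}\gamma)$, contradicting Theorem~\ref{thm: properties of M} exactly as in the proof of Theorem~\ref{secSPNmod}. The case $W_{0}=\Z_{2^{n}}$ forces $W=V$, which is not proper.

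The main obstacle is the remaining diagonal case $W_{0}=\{0\}$, where $W=\{(a,\theta(a)):a\in A\}$ is the graph of an automorphism $\theta$ of $A$ and the previous reduction degenerates (it gives the vacuous identity $0\rho=0\rho$). Here the invariance relation translates, after the substitution $b=\theta(a)$, into the pointwise identity $b\rho=\theta^{-1}(b)\oplus(\theta(b)\boxplus 0\rho)$ for all $b\in A$. The key observation is that every automorphism of $A=\Span{2^{q}}$ is multiplication by an odd unit, hence preserves the low-order block structure: for any non-trivial proper subgroup $D\le A$ (take $D=A$ if $A$ is proper, and any non-trivial proper $D$ if $A=\Z_{2^{n}}$) both $\theta(b)$ and $\theta^{-1}(b)$ lie in $D$ for $b\in D$, so the bits of $b\rho$ below the lowest free bit of $D$ are constantly equal to those of $0\rho$. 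Since $\rho$ is a bijection, the free bits of $D\rho$ must exhaust all remaining values, which forces $\mathtt{type}(D\rho)=\mathtt{type}(D)$; as $D\rho=(D\gamma)\lambda$ and $\mathtt{type}(D\gamma)=\mathtt{type}(D)$, this once more contradicts the non-type-preservation of $\lambda$.

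Putting the three cases together shows that no admissible $W$ exists, so $\Gamma_{\infty}(\mathcal{C})$ is primitive. The delicate point throughout is the interplay between the \textsc{xor} $\oplus$ used in the Feistel combination $\mathcal{P}$ and the modular sum $\boxplus$ defining both the translations and the relevant subgroups; the argument goes through precisely because automorphisms of $\boxplus$-subgroups of $\Z_{2^{n}}$ fix the low-order bits, which is exactly what allows the $\oplus$ appearing in $\mathcal{P}$ to be absorbed into a type computation and thereby brought back to Theorem~\ref{thm: properties of M}.
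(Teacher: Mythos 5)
Your proposal is correct and follows essentially the same route as the paper: the paper's proof simply imports the Goursat-Lemma reduction from Section 4 of \cite{GOST_ric}, which terminates exactly at the relation $D\rho = 0\rho \boxplus D$ for a non-trivial proper subgroup $D$ of $\Z_{2^n}$, and then excludes that case via Theorem \ref{thm: properties of M} precisely as in the proof of Theorem \ref{secSPNmod}. Your explicit reconstruction of the imported reduction (equal projections from comparing first components, $N_1=N_2=W_0$ from the linear ordering of subgroups of a cyclic $2$-group, the fibre argument giving $W_0\rho = 0\rho\boxplus W_0$ when $W_0$ is non-trivial and proper, and the characteristic-subgroup, low-order-bit argument in the diagonal case, each landing on the same terminal relation) is sound and matches what the cited reference supplies.
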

\begin{proof}
The proof is the same as the one given  in Section 4 of \cite{GOST_ric}, which uses the Goursat's Lemma \cite{goursat}, until the case $D\rho =  0 \rho\boxplus D$ with $D$ a non-trivial proper subgroup of $\Z_{2^n}$ is reached. Finally, for this case we can proceed as done in the proof of Theorem  \ref{secSPNmod} and apply Theorem \ref{thm: properties of M}.
\end{proof}

\section{Conclusions and open problems}\label{sec:concl}

A key feature of a block cipher is the ability of resisting against known attacks, such as differential, linear and algebraic attacks. In this work we focus on the imprimitivity attack proposed in \cite{Pat}; we approached this problem in the case of block ciphers with addition$\mod 2^n$ as key mixing function. Our main result is the characterization of binary matrices (associated to mixing layers) accordingly to the newly introduced property of being \textit{type-preserving}. Then, we show how non-type-preserving matrices assure resistance against imprimitivity attacks (see Theorems \ref{secSPNmod} and \ref{secGOSTlike}).
\\
The study of primitivity in block ciphers is dependent on the key mixing function. Therefore, it could be interesting to adapt the definition of non-type-preserving mixing layer to other actions of the key. Future directions will be the analyses of $n$-bits block ciphers whose key mixing function is the addition$\mod 2^m$, acting in parallel on disjoint subsets of $m|n$ bits of the state. We remark that the case $m=n$ is the topic of this work, while the case $m=1$ implies that the key mixing function is the addition$\mod 2$ between the key and the state, hence it is already discussed in \cite{CDVS09}.\\
A further work will be to design an instance of the generalized GOST-like cipher, presented in Section \ref{sec: gost-like}, by choosing a non-type-preserving mixing layer, a parallel S-Box and a key-schedule and then to make a more detailed analysis of its security, including the study of classical statistical
attacks. This approach could indeed give new insights on ciphers using addition$\mod 2^n$ as key mixing function.


\paragraph{Acknowledgment}
The authors are grateful to the anonymous referees for their insightful comments and suggestions.



\end{document}